\documentclass[a4paper, 12pt]{amsart}

\usepackage[a4paper, margin=2cm]{geometry}

\usepackage{lineno}

\usepackage{amsmath}
\usepackage{amssymb}
\usepackage{calc}
\usepackage{enumitem}
\usepackage{graphicx}
\usepackage{tikz-cd}
\usepackage{url}
\usepackage{xcolor}
\usepackage{etoolbox}
\usepackage{tabularx}
\usepackage{mathtools}
\usepackage{xcolor}
\usepackage{mathrsfs}
\usepackage{hyperref}

\hypersetup{
  colorlinks   = true, 
  urlcolor     = {blue!50!black}, 
  linkcolor    = {blue!50!black}, 
  citecolor   = {red!50!black} 
}

\numberwithin{equation}{section}

\usepackage{cleveref}

\newenvironment{numthm}[1]
{\customthm}
{\endcustomthm}

\theoremstyle{plain}
\newtheorem{thm}{Theorem}[section]

\newtheorem{prop}[thm]{Proposition}
\newtheorem{lem}[thm]{Lemma}
\newtheorem*{lem*}{Lemma}
\newtheorem*{cor*}{Corollary}
\theoremstyle{definition}
\newtheorem{defn}[thm]{Definition}

\newtheorem{notn}[thm]{Notation}
\newtheorem*{qn*}{Question}

\theoremstyle{remark}
\newtheorem{rem}[thm]{Remark}
\newtheorem*{term*}{Terminology}

\DeclareMathOperator{\Age}{Age}

\DeclareMathOperator{\Aut}{Aut}

\DeclareMathOperator{\id}{id}
\DeclareMathOperator{\Nbhd}{N}

\DeclareMathOperator{\tp}{tp}

\newcommand{\N}{\mathbb{N}}
\newcommand{\Q}{\mathbb{Q}}

\newcommand{\ou}{\,\vee\,}
\newcommand{\ic}{\mathbin{\bot}}

\newcommand{\mc}[1]{\mathcal{#1}}

\newcommand{\ex}{\exists\,}
\newcommand{\sub}{\subseteq}

\newcommand{\emp}{\varnothing}

\newcommand{\fin}{\subseteq_{\text{fin\!}}}

\newcommand{\Ainf}{\Age_{\text{inf}}}
\newcommand{\op}{{\,\text{op}}}

\def\ind{\mathrel{\raise0.2ex\hbox{\ooalign{\hidewidth$\vert$\hidewidth\cr\raise-0.9ex\hbox{$\smile$}}}}}

\newenvironment{subproof}[1][\proofname]{%
\par\textit{Proof of claim. }}{%
  \hfill $\blacksquare$ \par 
}

\newcommand{\Fr}{Fra\"{i}ss\'{e} }
\newcommand{\Frn}{Fra\"{i}ss\'{e}}

\newcommand{\Ka}{Kat\v{e}tov }

\setlength{\parskip}{0.1cm}
\setlength{\parindent}{0pt}

\pagestyle{plain}

\allowdisplaybreaks

\newcommand{\lessorinc}{%
  \mathrel{%
    \vcenter{\offinterlineskip
      \ialign{##\cr$<$\cr\noalign{\kern-2pt}$\ic$\cr}%
    }%
  }%
}

\usepackage{stackengine}
\stackMath

\makeatletter
\def\author@andify{%
  \nxandlist {\unskip ,\penalty-1 \space\ignorespaces}%
    {\unskip {} \@@and~}%
    {\unskip \penalty-2 \space \@@and~}%
}
\makeatother

\author{Aleksandra Kwiatkowska}
\address{\parbox{\linewidth}{Aleksandra Kwiatkowska\\
Institut f\"{u}r Mathematische Logik, Universit\"{a}t  M\"{u}nster\\ Einsteinstraße 62\\
48149 M\"{u}nster, Germany\\
and \\
Instytut Matematyczny, Uniwersytet Wroc{\l}awski\\
pl. Grunwaldzki 2/4 \\ 
50-384 Wroc{\l}aw, Poland
}
}
\email{kwiatkoa@uni-muenster.de}

\thanks{The first and second authors were funded by the Deutsche Forschungsgemeinschaft (DFG, German Research Foundation) under Germany’s Excellence Strategy EXC 2044–390685587, Mathematics M\"{u}nster: Dynamics–Geometry–Structure and CRC 1442 Geometry: Deformations and Rigidity.}

\author{Rob Sullivan}
\address{\parbox{\linewidth}{Rob Sullivan\\
Computer Science Institute\\ Charles University\\ Prague, Czech Republic
}
}
\email{robertsullivan1990+maths@gmail.com}

\thanks{The second author is additionally funded by Project 24-12591M of the Czech Science Foundation (GA\v{C}R)}

\author{Jeroen Winkel}
\address{Jeroen Winkel}
\email{winkeljeroen+maths@gmail.com}

\subjclass[2020]{03C15, 06A07, 20B27, 06A06}


\keywords{partial order, generic poset, automorphism, unique extension}

\title{Embeddings into the generic poset}

\date{\today}

\begin{document}

\begin{abstract}
    Let $M$ be the generic poset, defined as the \Fr limit of the class of finite posets. We show that every countably infinite poset $A$ can be embedded with coinfinite image into $M$ so that each automorphism of the image of $A$ extends uniquely to an automorphism of $M$.
\end{abstract}

\maketitle

\section{Introduction}

We say that a first-order structure $M$ is a \emph{\Fr structure} if it is countably infinite and ultrahomogeneous (see \cite{Hod93}, \cite{Mac11} for background). Let $M$ be a \Fr structure, and let $\Age_\omega(M)$ be the class of structures $A$ which are embeddable in $M$. For finite $A \in \Age_\omega(M)$, immediately by the definition of ultrahomogeneity we have that for each embedding $f : A \to M$, each automorphism $g$ of $f(A)$ extends to an automorphism $\theta(g)$ of $M$, giving a map $\theta : \Aut(f(A)) \to \Aut(M)$. Two questions immediately arise:
\begin{itemize}
    \item If $A$ is countable, does there exist an embedding $f : A \to M$ with such an extension map $\theta$?
    \item Can we additionally require that $\theta$ is a group embedding, or that $\theta$ is unique?
\end{itemize}

Let $A \in \Age_\omega(M)$, and let $f : A \to M$ be an embedding. We say that $f$ is \emph{extensive} if there exists a group embedding $\theta : \Aut(f(A)) \to \Aut(M)$ such that $\theta(g)$ extends $g$ for all $g \in \Aut(f(A))$. We call $f$ \emph{uniquely extensive} if each automorphism of $f(A)$ extends uniquely to an automorphism of $M$. Any uniquely extensive embedding is extensive (by uniqueness, the extension map must preserve composition). Also note that if $M$ has strong amalgamation, then no finite $A$ admits a uniquely extensive embedding, so for uniquely extensive embeddings we restrict attention to infinite $A$.

We will say that $\Age_\omega(M)$ is \emph{extensible} if each $A \in \Age_\omega(M)$ admits an extensive embedding into $M$. We write $\Ainf(M)$ for the set of infinite elements of $\Age_\omega(M)$, and say that $\Ainf(M)$ is \emph{uniquely extensible} if each of its elements admits a uniquely extensive embedding into $M$.

Bilge and Melleray showed that $\Age_\omega(M)$ is extensible for structures $M$ with free amalgamation (\cite[Theorem 3.9]{BM13} -- an adaptation of an argument of Uspenskij in \cite{Usp90}), and this was generalised to structures $M$ with a stationary independence relation (SIR) by M\"{u}ller (\cite{Mul16}). (SIRs were originally defined in \cite{TZ13}.) The construction in each case is similar, and is sometimes referred to as a \emph{\Ka tower construction} (\Ka used this to produce the Urysohn space in \cite{Kat88}). A general abstract categorical formulation of this construction is given in \cite{KM17}, and investigated further in \cite{KSW25}.

Henson (\cite{Hen71}) showed that $\Ainf(M)$ is uniquely extensible for $M$ equal to the random graph (also shown independently by Macpherson and Woodrow in \cite{MW92}), and Bilge (\cite{Bil12}) extended this result in his PhD thesis to any transitive \Fr structure $M$ with free amalgamation in a finite relational language (and satisfying the non-triviality condition that there are two distinct points related by some relation, i.e.\ $M$ is not an indiscernible set). For the particular case of the random $K_n$-free graph, see \cite{BJ12}.

It is therefore natural to wonder if the presence of a stationary independence relation on $M$ implies that $\Ainf(M)$ is uniquely extensible. The graph $M$ consisting of the disjoint union of $\omega$-many countably infinite cliques shows that this is not the case, but one may argue that this example is somehow too trivial. Jaligot (\cite{Jal07}) showed that $\Ainf(M)$ is uniquely extensible for $M$ equal to the random tournament (which admits an independence relation satisfying all the axioms of a SIR except symmetry), and Melleray (\cite{Mel05}) showed that, for any separable metric space $A$, there exists an isometric embedding $f$ of $A$ into the (complete) Urysohn space $\mathbb{U}$ such that every isometry of $f(A)$ extends uniquely to an isometry of $\mathbb{U}$. (Note that the rational Urysohn space admits a (local) SIR: see \cite[Example 2.2.1(a)]{TZ13}.)

In this paper, we consider the case of the \emph{generic poset} $M$: the \Fr limit of the class of all finite posets. Observe that $M$ admits a stationary independence relation: for $A, B, C \fin M$, define $B \ind_A C$ if for $b \in B \setminus A$, $c \in C \setminus A$ we have:
\begin{itemize}
    \item $b \neq c$;
    \item $b < c$ iff there exists $a \in A$ with $b < a < c$;
    \item $b > c$ iff there exists $a \in A$ with $b > a > c$.
\end{itemize}

The main (and indeed only) theorem of this paper states that $\Ainf(M)$ is uniquely extensible:

\begin{numthm}{\ref{main thm}}
    Let $M$ denote the generic poset, and let $A$ be a countable poset. Then there exists an embedding $f : A \to M$ with coinfinite image such that each automorphism of $f(A)$ extends uniquely to an automorphism of $M$.
\end{numthm}

We also note that the induced group embedding $\Aut(f(A)) \to \Aut(M)$ given by the above theorem will in addition be an embedding of topological groups with closed image (this is immediate by \Cref{ac props}\ref{ac:stab}). Also, the fact that the embedding $f$ given by \Cref{main thm} has coinfinite image ensures that we exclude the trivial case of surjective $f$ when $A \cong M$.

\subsection*{Comparison with previous results}

Our construction of the embedding $f$ differs significantly from the previous examples mentioned above and is very particular to the generic poset; essentially, the fundamental difficulty that prevents a straightforward generalisation of the previous results of Bilge for free amalgamation classes or Jaligot for tournaments is that the poset relation $<$ is transitive. Note that, in fact (see \cite[Proposition 8.33]{Bil12}), in the case of linear orders, no countably infinite linear order $A$ admits a uniquely extensive embedding $f$ into $(\Q, <)$: if such an $f$ were to exist, then by uniqueness $f(A)$ must be dense in $\Q$, and one can construct an automorphism of $f(A)$ for which any extension would be forced to send a rational to an irrational.

We now briefly sketch the proof ideas of previous results. A sketch proof of unique extensibility for the random graph (\cite{Hen71}) is as follows: given a countable graph $A$, inductively construct a chain $A = M_0 \sub M_1 \sub \cdots$ where, to build $M_k$ from $M_{k - 1}$, for each $F \fin M_{k - 1}$ with $|F \cap M_0| = k$ we add a new vertex $v_F$ adjacent to exactly $F$. Let $M_\omega = \bigcup_{k < \omega} M_k$. For $v \in M_\omega$ let $\Nbhd(v)$ denote the neighbourhood of $v$ in $M_\omega$. It is clear that any $h \in \Aut(M_\omega)$ extending $g \in \Aut(A)$ satisfies $|\Nbhd(h(v)) \cap M_0| = |\Nbhd(v) \cap M_0|$ for each $v \in M_\omega \setminus M_0$, and therefore stabilises each ``layer" $M_k$ setwise, giving uniqueness of automorphism extension. To show that $M_\omega$ is isomorphic to the random graph, one checks that for each pair of finite disjoint sets $U, V \sub M_\omega$, there is a ``witness" $m \in M_\omega \setminus (U \cup V)$ with $m$ adjacent to all of $U$ and none of $V$: for this, as we have $U, V \sub M_{k-1}$ for some $k \geq |U|$, we may pad out $U$ with vertices from $M_0 \setminus V$ to produce $F \sub M_{k-1}$ with $|F \cap M_0| = k$, and the new vertex of $M_k$ whose $M_{k-1}$-neighbourhood is $F$ then gives the required witness $m$.

An analogous construction (\cite{BJ12}) by Bilge and Jaligot for the random triangle-free graph again uses the invariant $|\Nbhd(v) \cap M_0|$ to define layers $M_k$ which are setwise-stabilised by any extension of an automorphism of $M_0$, but there is an additional complication: one cannot begin with $M_0 = A$, as then it is not always possible to pad out $U$ with vertices from $M_0$ as in the argument above. (For example, consider the case where $A$ is an infinite star graph with central vertex $a$. Let $U = \{a, u\}$ be an anticlique with $u \in M_1 \setminus M_0$. Any padding-out $F$ of $U$ with an additional vertex from $A$ in an attempt to find a witness for $U$ in $M_2$ will not work, as there cannot be a new vertex of $M_2$ adjacent to $F$: this would create a triangle.) Bilge and Jaligot fix this issue by taking $M_0 = A \sqcup \N$, where the edges of $\N$ are given by $\{n, n + 1\}$ for $n \in \N$ (so $\N$ is rigid), and where there are no edges between $A$ and $\N$. They then perform the same construction as above, always requiring with each new vertex added that it is adjacent to at least one element of $A$. This ensures that any $h \in \Aut(M_\omega)$ extending an element of $\Aut(A)$ must fix $\N$, and therefore $h$ setwise-stabilises each $M_k$. Padding out with the new $M_0$ now works: one uses elements of $\N$.

In the case of the generic poset that we are concerned with in this paper, we also use the idea of attaching a poset to $A$ to define $M_0$ and then inductively constructing a chain $M_0 \sub M_1 \sub \cdots$. However, our approach requires several new ideas (also in the construction of $M_0$): it is difficult to see how the construction of Bilge and Jaligot could work, as it seems that the transitivity of $<$ breaks natural invariants that one might try.

\subsection*{Structure of the paper and general proof outline.}

Let $M$ denote the generic poset, and let $A$ be a countable poset. Let $G = \Aut(A)$. The rough overview of the construction of a uniquely extensive embedding $A \to M$ is as follows. First, we attach a new poset $R \cup S \cup T$ to $A$ to produce a poset $M_0$ (see \Cref{d: M_0}). Each point of $R$ and $S$ will have its type over $A$ being a $G$-invariant limit point in the space of (external) types over $A$, which we discuss in \Cref{s: valid triples} and \Cref{s: limits}, and this enables us to easily extend the action $G \curvearrowright A$ to $G \curvearrowright M_0$. In \Cref{ss: acceptable chains} we then inductively construct a chain $M_0 \sub M_1 \sub \cdots$ of $G$-posets (posets with a $G$-action by automorphisms), and we do this in such a way that the union $P$ is isomorphic to the generic poset (by guaranteeing witnesses for each possible one-point extension of a finite substructure -- see \Cref{enough aps}). As we construct $P$ as a $G$-poset, we immediately have that each automorphism of $A$ extends to an automorphism of $P$. To guarantee uniqueness of automorphism extension -- the most difficult and technically intricate part of the construction -- we also ensure that $P$ has the property that any $h \in \Aut(P)$ with $h|_A = \id_A$ must also be the identity on $M_0$ (see \Cref{lem:minimality} and the proof of the main \Cref{main thm}), and furthermore is the identity on the upward-closure $S^+$ of $S$ (see \Cref{ss: stabilisers} and \Cref{s: moieties}). This is then sufficient to show that in fact $h = \id_P$ (see \Cref{uniq on +-closed}).

\subsection*{Acknowledgements.} We would like to thank Alessandro Codenotti and Aristotelis Panagiotopoulos for discussions in the joint project with the second and third authors on generic embeddings into \Fr structures, which were a useful tool in early attempts to prove the main result (though they do not appear in this paper). We would also like to thank Adam Barto\v{s}, David Bradley-Williams and Wies\l{}aw Kubi\'{s} for initial discussions, and we would like to thank Itay Kaplan for his talk in M\"{u}nster in September 2023 on the automorphism group of the Rado meet-tree, which indirectly sparked our interest in universality of automorphism groups.

\section{Valid triples and external types} \label{s: valid triples}

We first define some terminology and notation that we will use throughout the rest of this paper.

\begin{defn} \label{+ and - closure}
    Let $P$ be a poset, and let $Q \sub P$. We write
    \begin{align*}
        Q^- &= \{v \in P \mid \ex w \in Q, v \leq w\},\\
        Q^+ &= \{v \in P \mid \ex w \in Q, v \geq w\}.
    \end{align*}
    We call $Q^-$ and $Q^+$ the \emph{downward-} and \emph{upward-closure} of $Q$ respectively.
    
    For $v \in P$, we write $v^- = \{v\}^-$, $v^+ = \{v\}^+$. For $Q \sub P$ and $v \in P$, we write $v \ic Q$ if $v \ic w$ for all $w \in Q$.
\end{defn}

\begin{defn} \label{d: ext types}
    Let $A$ be a poset. We say that a poset $E$ is a \emph{one-point extension} of $A$ if $A \sub E$ and $|E \setminus A| = 1$, and we refer to the unique element $e$ of $E \setminus A$ as the \emph{extension point} of $E$. We write $\tp(e / A)$ for the \emph{quantifier-free type of $e$ over $A$}. All types considered in this paper will be quantifier-free.
    
    We define $\eta(A)$ to be the set of types $\tp(e/A)$ where $e$ is the extension point of a one-point extension of $A$, and call $\eta(A)$ the set of \emph{external types over $A$}. We will usually write $p \in \eta(A)$ rather than $p(x) \in \eta(A)$, with the free variable $x$ being clear from context. For $A' \sub A$ we write $p|_{A'}$ for the restriction of $p$ to $A'$ (that is, the set of formulae in $p$ only involving elements of $A'$).
\end{defn}

\begin{defn}
    Let $A$ be a poset. For $p \in \eta(A)$, if $\phi(x, \bar{a}) \in p$, we will usually conflate $x$ and $p$ for the sake of brevity and just write $\phi(p, \bar{a})$. For example, rather than writing that the formula $x < a$ is contained in $p$, we simply write $p < a$. (Informally, we think of $p$ as a one-point extension taken up to isomorphism over $A$.)

    For $p \in \eta(A)$, we let \[U_p = \{a \in A \mid a < p\},\quad V_p = \{a \in A \mid a \ic p\},\quad W_p = \{a \in A \mid a > p\}.\]

    Let $(U, V, W)$ be a partition of $A$. We say that $(U, V, W)$ is a \emph{valid triple} for $A$ if there exists $p \in \eta(A)$ with $(U_p, V_p, W_p) = (U, V, W)$. It is immediate that there is a one-to-one correspondence between elements of $\eta(A)$ and valid triples given by $p \mapsto (U_p, V_p, W_p)$. In the sequel we will mildly abuse notation and identify external types with valid triples when convenient: we thus often write $p = (U_p, V_p, W_p)$. (One could instead define the set of external types entirely in terms of valid triples if desired.)

    If $A$ is finite and $(U, V, W)$ is a valid triple for $A$, then we say that $(U, V, W)$ is a finite valid triple. 
\end{defn}

The proof of the following lemma is straightforward.

\begin{lem}
    Let $A$ be a poset, and let $(U, V, W)$ be a partition of $A$. Then $(U, V, W)$ is a valid triple iff the following hold:
    \begin{itemize}
        \item $u < w$ for $u \in U$, $w \in W$;
        \item $\neg(u > v)$ for $u \in U$, $v \in V$;
        \item $\neg(w < v)$ for $w \in W$, $v \in V$.
    \end{itemize}
\end{lem}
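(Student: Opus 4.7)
The plan is to prove both implications directly from the definition of a valid triple, with the forward direction being an essentially immediate consequence of transitivity of $<$, and the backward direction requiring us to actually construct a one-point extension $A[e]$ witnessing that $(U,V,W)$ arises from some external type.

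For the forward direction, I assume $(U,V,W) = (U_p, V_p, W_p)$ for some $p \in \eta(A)$, realised by an extension point $e \in A[e]$. If $u \in U$ and $w \in W$ then $u < e < w$, so $u < w$ by transitivity. If $u \in U$ and $v \in V$, then $u > v$ would give $v < u < e$ and hence $v < e$, contradicting $v \in V$. The last clause is symmetric. This handles the ``only if'' part.

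For the converse, given a partition $(U,V,W)$ satisfying the three conditions, I would define a one-point extension $A[e]$ by declaring $u < e$ for $u \in U$, $e < w$ for $w \in W$, and $e \ic v$ for $v \in V$, and then check that $A[e]$ is a poset (whence $p := \tp(e/A)$ witnesses the triple). Reflexivity is clear, and antisymmetry is immediate from the fact that $U, V, W$ are pairwise disjoint, so the only real content is transitivity. I first observe that the hypotheses force $U$ to be downward-closed and $W$ to be upward-closed in $A$: if $u \in U$ and $a < u$ in $A$, then $a \in W$ would give $u < a$ by the first condition (contradiction), while $a \in V$ would give $u > a$, contradicting the second condition, so $a \in U$; the argument for $W$ is dual. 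Transitivity of the new relation then breaks into the three non-trivial cases $u_1 < u_2 < e$ (handled by downward-closure of $U$), $e < w_1 < w_2$ (handled by upward-closure of $W$), and $u < e < w$ (handled by the first condition directly), each of which is now automatic.

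There is no real obstacle here; the only thing to be careful about is recording all the transitivity cases that involve the new point $e$ and noticing that downward-closure of $U$ and upward-closure of $W$ are not extra hypotheses but are derivable from the three stated conditions, so the stated conditions really are sufficient.
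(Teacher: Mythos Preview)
Your proof is correct and is exactly the straightforward verification the paper has in mind; the paper in fact omits the proof entirely, stating only that it is straightforward. Your argument fills in the details in the natural way, and the observation that the three conditions force $U$ to be downward-closed and $W$ to be upward-closed is precisely what makes the transitivity check go through.
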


\begin{defn}
    Let $A, B$ be posets with $A \sub B$. Let $(U, V, W)$ be a valid triple for $A$. We call $b \in B \setminus A$ a \emph{witness} for $(U, V, W)$ if $\tp(b/A) = (U, V, W)$.
\end{defn}

It is straightforward to see that a countable poset $P$ is isomorphic to the generic poset iff $P$ contains a witness for each finite valid triple in $P$.

We now fix a countably infinite poset $A$ for the remainder of this paper.

\begin{defn} \label{ext types topology}
    We equip $\eta(A)$ with the following topology: for $A_0 \fin A$ and for $p_0$ the type over $A_0$ of a one-point extension of $A_0$, we specify that $\langle p_0 \rangle = \{p \in \eta(A) \mid p \supseteq p_0\}$ is a basic open set.

    Let $(U_0, V_0, W_0) = p_0$. We will also often mildly abuse notation and write $\langle U_0, V_0, W_0 \rangle$ for the basic open set $\langle p_0 \rangle$.
\end{defn}

\begin{lem}
    $\eta(A)$ is a Polish space.
\end{lem}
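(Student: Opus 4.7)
The plan is to identify $\eta(A)$ with a closed subset of the Polish space $\{0,1,2\}^{A}$ (with the discrete topology on $\{0,1,2\}$ and the product topology on the product). Since $A$ is countably infinite, $\{0,1,2\}^A$ is homeomorphic to Cantor space and is thus Polish. By the previous lemma, valid triples are characterised by conditions ranging over pairs in $A$, so the image of $\eta(A)$ will be a countable intersection of clopen sets and hence closed, making $\eta(A)$ Polish as a closed subspace of a Polish space.

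In more detail: first I would define the injection $\iota : \eta(A) \to \{0,1,2\}^A$ sending a valid triple $(U,V,W)$ to the function $a \mapsto 0,1,2$ according to whether $a \in U, V, W$ respectively. A basic open set in the product topology on $\{0,1,2\}^A$ is determined by specifying finitely many coordinates, and the preimage under $\iota$ of such a set (when the specified values are jointly realisable as the restriction of some valid triple) is precisely a basic open set $\langle U_0, V_0, W_0 \rangle$ as in \Cref{ext types topology}. Thus $\iota$ is a topological embedding onto its image.

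Next I would show that the image of $\iota$ is closed in $\{0,1,2\}^A$. By the characterisation of valid triples in the preceding lemma, a function $f \in \{0,1,2\}^A$ lies in the image iff for every pair $(a,b) \in A \times A$ the assignment $(f(a), f(b))$ is one of finitely many combinations compatible with the given order relation between $a$ and $b$ in $A$. For each fixed pair $(a,b)$, this is a condition depending only on the two coordinates $a$ and $b$, so it defines a clopen subset $C_{a,b}$ of $\{0,1,2\}^A$. The image of $\iota$ is then $\bigcap_{(a,b) \in A \times A} C_{a,b}$, a countable intersection of clopen sets, hence closed.

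There is no real obstacle here: the main step is just unwinding definitions to match the topology on $\eta(A)$ with the subspace topology from $\{0,1,2\}^A$, and then observing that the conditions defining ``valid triple'' are pairwise and hence cut out a closed set. Since closed subspaces of Polish spaces are Polish, this gives the result.
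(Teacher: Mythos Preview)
Your argument is correct. It differs in presentation from the paper's proof but is essentially the same idea seen from a different angle. The paper enumerates $A$, writes down the obvious ultrametric (first place where two types disagree), and argues directly that $\eta(A)$ is sequentially compact, hence complete. You instead embed $\eta(A)$ into $\{0,1,2\}^A$ and observe that the pairwise characterisation of valid triples from the preceding lemma cuts out a closed set; since $\{0,1,2\}^A$ is compact metrisable this yields Polish (indeed compact) immediately. The two arguments are interchangeable: the paper's metric is exactly the standard metric on $\{0,1,2\}^A$ pulled back along your map $\iota$, and your closedness argument is what underlies the paper's compactness claim. Your route has the mild advantage of making explicit use of the preceding lemma and avoiding a by-hand sequential compactness check.
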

\begin{proof}
    The topology given in \Cref{ext types topology} is second countable, as $A$ is countable and the language $\{<\}$ of posets is finite. To define a metric on $\eta(A)$, we enumerate $A$ as $a_0, a_1, \cdots$. For $k \in \N$, let $A_k = \{a_i \mid i < k\}$. Given $p, q \in \eta(A)$, let $m \in \N$ be the least index such that $p|_{A_m} \neq q|_{A_m}$. We define $d(p, q) = \frac{1}{m+1}$. It is straightforward to see that $d$ is a metric compatible with the topology on $\eta(A)$, and that $\eta(A)$ is (sequentially) compact and hence complete with respect to $d$.
\end{proof}

\begin{rem}
    As the theory of the generic poset is $\omega$-categorical and has quantifier elimination, we have that $\eta(A)$ is a subspace of $S_1(A)$, the space of $1$-types in the usual model-theoretic sense (where we consider $A$ as a subset of the generic poset $M$). It is straightforward to see that $\eta(A)$ is a closed subspace of $S_1(A)$: if a type is not external, this is witnessed by a formula $x = a$.
\end{rem}

\begin{defn}
    Let $p, q \in \eta(A)$. We say that $(p, q)$ is \emph{$<$--valid} if there exists a poset $E = A \cup \{b, c\}$ extending $A$ with $\tp(b/A) = p$, $\tp(c/A) = q$ and $b < c$. We define \emph{$\ic$--valid} and \emph{$>$--valid} similarly.
\end{defn}

The proof of the following lemma is again straightforward.

\begin{lem} \label{validity check}
    Let $p, q \in \eta(A)$. Then:
    \begin{itemize}
        \item $(p, q)$ is $<$--valid iff $U_p \sub U_q$ and $V_p \sub U_q \cup V_q$;
        \item $(p, q)$ is $\ic$--valid iff $U_p \cap W_q = \varnothing$ and $U_q \cap W_p = \varnothing$.
    \end{itemize}
\end{lem}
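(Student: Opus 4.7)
The plan is to verify both equivalences by enumerating the transitivity constraints that the additional points $b$ (realizing $p$) and $c$ (realizing $q$) impose on $A \cup \{b, c\}$. In each case, the forward direction is immediate from transitivity of $<$, and the converse amounts to checking that the displayed conditions are exactly what is needed for the prescribed relations to extend to a partial order. The only ``obstacle'' is routine case-bookkeeping; this matches the author's remark that the proof is straightforward.

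For the first bullet, in the forward direction, suppose $E = A \cup \{b, c\}$ realizes $p, q$ with $b < c$. Then $a \in U_p$ gives $a < b < c$, hence $a \in U_q$, so $U_p \sub U_q$. If $a \in V_p$ and $a \in W_q$, we would have $a > c > b$, contradicting $a \ic b$, so $a \in U_q \cup V_q$. Conversely, I would define the relation on $A \cup \{b, c\}$ by taking the posets on $A \cup \{b\}$ and $A \cup \{c\}$ prescribed by $p$ and $q$, and adjoining $b < c$. Observe that $U_p \sub U_q$ and $V_p \sub U_q \cup V_q$ jointly imply $W_q \sub W_p$ (since $W_q$ is disjoint from $U_p$ and from $V_p$). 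The new transitivity instances to verify are $a < b < c$ for $a \in U_p$ (demands $a \in U_q$, supplied by $U_p \sub U_q$) and $b < c < a$ for $a \in W_q$ (demands $a \in W_p$, supplied by $W_q \sub W_p$). Hence the resulting relation is a partial order realizing $(p, q)$ with $b < c$.

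For the second bullet, in the forward direction, if $b \ic c$ and $a \in U_p \cap W_q$, then $c < a < b$ would force $c < b$, contradicting $b \ic c$; symmetrically $U_q \cap W_p = \emp$. Conversely, under the stated hypotheses, I would declare $b \ic c$ and take the relations between $\{b, c\}$ and $A$ as prescribed by $p, q$. Since $b$ and $c$ are incomparable, the only potentially problematic new chains are $c < a < b$ with $a \in U_p \cap W_q$ (forcing $c < b$) or $b < a < c$ with $a \in U_q \cap W_p$ (forcing $b < c$); both are ruled out by hypothesis, and the extension is a valid poset realizing $(p, q)$ with $b \ic c$.
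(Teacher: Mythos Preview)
Your proposal is correct and is precisely the routine verification the paper has in mind; the paper gives no proof beyond the remark that it is straightforward, and your case analysis fills this in accurately. One tiny point you leave implicit in the first bullet is that the chain $c < a < b$ (which would force $c < b$ and contradict $b < c$) cannot occur, but this is immediate from $U_p \sub U_q$ and $U_q \cap W_q = \varnothing$, so there is no genuine gap.
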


(Note that $(p, p)$ is $<$--valid and $\ic$--valid for all $p \in \eta(A)$.)

\begin{defn} \label{validity poset}
    We define a partial order $\ll$ on $\eta(A)$ by: $p \ll q$ if $p \neq q$ and $(p, q)$ is $<$-valid.
\end{defn}

\begin{lem}\label{lem:join-and-meet}
    Let $p, q \in \eta(A)$. Then $p$ and $q$ have a meet $p \wedge q$ and a join $p \vee q$ in the partial order $\ll$. Also, for each basic open set $\langle p_0 \rangle$ and $p, q \in \langle p_0 \rangle$, we have that $p \wedge q, p \vee q \in \langle p_0 \rangle$.
\end{lem}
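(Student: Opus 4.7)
The plan is to first extract a cleaner characterisation of the order $\ll$ in terms of the valid triples, and then define the meet and join by explicit formulas, after which the verification of the universal properties and of stability under basic open sets becomes straightforward.

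First, I would observe that by \Cref{validity check}, $p \ll q$ (equivalently $p = q$ or $p \ll q$) can be rewritten purely in terms of the $U$- and $W$-sets: the conditions $U_p \sub U_q$ and $V_p \sub U_q \cup V_q$ together are equivalent to saying $U_p \sub U_q$ and $W_q \sub W_p$. (The ``$V$-condition'' just says $V_p \cap W_q = \emp$; combined with $U_p \cap W_q = \emp$ and the fact that $A = U_p \cup V_p \cup W_p$, this forces $W_q \sub W_p$; conversely $U_p \sub U_q$ and $W_q \sub W_p$ recover the original formulation.)

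With this characterisation in hand, one candidate join and meet present themselves. I would define
\[
p \vee q := (U_p \cup U_q,\ A \setminus ((U_p \cup U_q) \cup (W_p \cap W_q)),\ W_p \cap W_q),
\]
\[
p \wedge q := (U_p \cap U_q,\ A \setminus ((U_p \cap U_q) \cup (W_p \cup W_q)),\ W_p \cup W_q).
\]
The main step is to verify that these are valid triples, i.e.\ satisfy the three conditions of the lemma preceding \Cref{validity poset}. For $p \vee q$: if $u \in U_p \cup U_q$ and $w \in W_p \cap W_q$, then (WLOG) $u < p < w$, so $u < w$; while both $\neg(u > v)$ and $\neg(w < v)$ for $v \in V_{p \vee q}$ are obtained by case analysis on whether $v \in V_p \cap V_q$, $V_p \cap W_q$, or $W_p \cap V_q$ (the case $v \in W_p \cap W_q$ is impossible) and invoking the corresponding validity clause of $p$ or $q$, or a direct comparability/transitivity argument. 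The analysis for $p \wedge q$ is dual (swapping the roles of $U$ and $W$, and of $<$ and $>$). This case analysis is the most fiddly part, but it is routine.

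Having established validity, I would check the universal property purely at the level of the characterisation above. For the join: $U_p \sub U_{p \vee q}$ and $W_{p \vee q} \sub W_p$ by construction, so $p$ is $\ll$-below or equal to $p \vee q$, and symmetrically for $q$; and if $s$ satisfies $p \sqsubseteq s$ and $q \sqsubseteq s$ (in the $\ll$-order together with equality), then $U_p \cup U_q \sub U_s$ and $W_s \sub W_p \cap W_q$, so $p \vee q$ lies $\ll$-below or equals $s$. The meet is entirely analogous. Finally, stability under basic open sets is immediate: if $p, q \in \langle p_0 \rangle$ with $p_0 = (U_0, V_0, W_0)$ a partition of some $A_0 \fin A$, then intersecting the formulas for $p \vee q$ and $p \wedge q$ with $A_0$ gives back $(U_0, V_0, W_0)$ since union commutes with intersection and both $p, q$ restrict to $p_0$. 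The hardest part of the whole proof is really the checking of validity of the two triples; everything else reduces to set-theoretic bookkeeping.
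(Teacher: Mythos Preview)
Your proof is correct and follows essentially the same approach as the paper: you give the same explicit formulas for the meet and join (your $V$-components are just the complements written out, which unwind to the paper's expressions $(V_p \cap V_q) \cup (V_p \cap U_q) \cup (V_q \cap U_p)$ and $(V_p \cap V_q) \cup (V_p \cap W_q) \cup (V_q \cap W_p)$), and then verify validity, the universal property, and stability under basic opens, whereas the paper simply declares all of this ``straightforward to check''. Your reformulation of $<$-validity as $U_p \sub U_q$ and $W_q \sub W_p$ is a clean organisational touch (note a small typo: the parenthetical should read ``$(p,q)$ is $<$-valid, equivalently $p = q$ or $p \ll q$'').
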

\begin{proof}
    It is straightforward to check that 
    \begin{align*}
        p \wedge q &= (U_p \cap U_q, (V_p \cap V_q) \cup (V_p \cap U_q) \cup (V_q \cap U_p), W_p \cup W_q),\\
        p \vee q &= (U_p \cup U_q, (V_p \cap V_q) \cup (V_p \cap W_q) \cup (V_q \cap W_p), W_p \cap W_q).
    \end{align*}
    
    Let $\langle U_0, V_0, W_0 \rangle$ be a basic open set containing $p$ and $q$. Then $U_0 \sub U_p \cap U_q$, $V_0 \sub V_p \cap V_q$, $W_0 \sub W_p \cap W_q$. So $\langle U_0, V_0, W_0 \rangle$ also contains $p \wedge q$ and $p \vee q$.
\end{proof}

\section{Upper and lower limits} \label{s: limits}

\begin{defn}
    Let $\theta \sub \eta(A)$ and $p \in \eta(A)$. We say that $p$ is an \emph{upper limit} of $\theta$ if for every open set $O \sub \eta(A)$ with $p \in O$, there exists $r \in \theta \cap O$ with $r \ll p$. We define lower limits similarly, replacing $\ll$ with $\gg$ .
\end{defn}

\begin{lem}\label{lem:upper-or-lower-limit}
    Let $\theta \sub \eta(A)$, and suppose that $\theta$ is closed under (finite) meets. An element of $\eta(A)$ is a limit point of $\theta$ if and only if it is an upper or lower limit of $\theta$.
\end{lem}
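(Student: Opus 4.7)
Plan. The reverse direction is immediate: if $p$ is an upper limit (resp.\ lower limit) of $\theta$, then for every open neighborhood $O$ of $p$ there exists $r\in\theta\cap O$ with $r\ll p$ (resp.\ $r\gg p$), and in particular $r\neq p$, witnessing that $p$ is a limit point of $\theta$.

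For the forward direction, suppose $p$ is a limit point of $\theta$. I would argue by contradiction: assume $p$ is neither an upper nor a lower limit. Then there exist basic open sets $\langle p_u\rangle,\langle p_l\rangle\ni p$ such that no $r\in\theta\cap\langle p_u\rangle$ satisfies $r\ll p$, and no $r\in\theta\cap\langle p_l\rangle$ satisfies $r\gg p$. Intersecting gives a basic open $\langle q_0\rangle$ with $p\in\langle q_0\rangle$ such that every element of $(\theta\cap\langle q_0\rangle)\setminus\{p\}$ is $\ll$-incomparable to $p$.

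Since $p$ is a limit point and $\eta(A)$ is second countable with the basis of $\langle p|_{A_n}\rangle$ (using the enumeration $a_0,a_1,\ldots$ of $A$ and $A_n=\{a_0,\ldots,a_{n-1}\}$), I would choose a sequence $r_n\in\theta\cap\langle q_0\rangle$, $r_n\neq p$, with $r_n\in\langle p|_{A_n}\rangle$ so that $r_n\to p$. Each $r_n$ agrees with $p$ on $A_n$, so the failure of $\ll$-comparability with $p$ is witnessed by some element of $A\setminus A_n$. The plan is then to use the closure of $\theta$ under meets together with the meet-stability within basic opens provided by \Cref{lem:join-and-meet} (which ensures $r_n\wedge r_m\in\theta\cap\langle q_0\rangle$) to produce, by taking finite meets of the $r_n$, an element of $\theta\cap\langle q_0\rangle$ that is $\ll$-comparable to $p$; this will contradict the assumption and finish the proof.

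The main obstacle is precisely this last step. A single meet $r_n\wedge r_m$ is $\ll r_n, r_m$ but need not be $\ll$-comparable to $p$; the argument has to use the convergence $r_n\to p$ to ensure that, for an appropriately chosen (possibly diagonal) subsequence, the finite meets force the $U$- and $V$-components to line up with those of $p$ on any prescribed finite window, and in fact globally enough that $U_{r_{n_1}\wedge\cdots\wedge r_{n_k}}\subseteq U_p$ and $V_{r_{n_1}\wedge\cdots\wedge r_{n_k}}\cap W_p=\emptyset$. I expect this to be carried out by a diagonal construction along $A_n$, exploiting the explicit meet formula $W_{r\wedge s}=W_r\cup W_s$ and $U_{r\wedge s}=U_r\cap U_s$ together with the pointwise eventual agreement $r_n(a)=p(a)$ on every fixed $a\in A$; once the diagonal meets are shown to yield a $\ll p$ (or dually $\gg p$) element in $\theta\cap\langle q_0\rangle$, the contradiction is immediate.
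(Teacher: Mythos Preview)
Your setup for the forward direction matches the paper's: assume $p$ is neither an upper nor a lower limit, and pass to a basic open $O'\ni p$ in which every element of $(\theta\cap O')\setminus\{p\}$ is $\ll$-incomparable to $p$. From there the paper does not take a sequence at all: it picks a single $q\in(\theta\cap O')\setminus\{p\}$, notes that $p\wedge q\in O'$ by \Cref{lem:join-and-meet}, asserts $p\wedge q\in\theta$, and concludes $p\wedge q=p$ (since nothing $\ll p$ lies in $\theta\cap O'$), whence $p\ll q$ --- contradiction. Your plan is substantially more elaborate, and the ``main obstacle'' you flag is genuine: for a finite meet $r=r_{n_1}\wedge\cdots\wedge r_{n_k}$ to satisfy $r\ll p$ you would need $\bigcap_i U_{r_{n_i}}\subseteq U_p$ and $W_p\subseteq\bigcup_i W_{r_{n_i}}$, and since $A\setminus U_p$ and $W_p$ may be infinite, no finite collection of indices need achieve this. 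Your sketch of a diagonal construction does not explain how to overcome this.

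In fact your plan cannot be completed, because the lemma as stated is false. Take $A=\mathbb{Z}$ with its usual order and let $\theta=\{q_{m,n}:m,n\geq 1\}$ where $q_{m,n}=(\{i:i\leq -m\},\,\{i:-m<i<n\},\,\{i:i\geq n\})$. Then $\theta$ is closed under meets (indeed $q_{m,n}\wedge q_{m',n'}=q_{\max(m,m'),\min(n,n')}$), and $p=(\varnothing,A,\varnothing)$ is a limit point of $\theta$; yet every element of $\theta$ has nonempty $U$-part and nonempty $W$-part, so no element of $\theta$ is $\ll$-comparable to $p$, and $p$ is neither an upper nor a lower limit. The paper's one-line finish hides the same gap: the step ``$p\wedge q\in\theta$'' uses closure of $\theta$ under meets, which only yields $p\wedge q\in\theta$ when $p\in\theta$. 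Under the additional hypothesis $p\in\theta$ (automatic in the paper's applications, where $\theta=\lambda(A)$ is topologically closed, so limit points lie in $\theta$) the paper's direct meet-with-$p$ argument works immediately, and no sequences or diagonalisation are needed.
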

\begin{proof}
    $\Leftarrow$: immediate. $\Rightarrow$: Suppose $p \in \eta(A)$ is a limit point of $\theta$ but is not an upper or lower limit of $\theta$. Then there are open sets $O_0$, $O_1$ containing $p$ such that, in the partial order $\ll$, the set $O_0 \cap \theta$ does not contain any types smaller than $p$ and $O_1 \cap \theta$ does not contain any types larger than $p$. Let $O = O_0 \cap O_1$. So $O \cap \theta$ only contains $p$ and types incomparable to $p$. Let $O' \subseteq O$ be a basic open set containing $p$. Then $O'$ is closed under joins and meets (see Lemma \ref{lem:join-and-meet}). As $p$ is a limit point, there is some $q \in O' \cap \theta$, $q \neq p$. Then $p \wedge q \in O' \cap \theta$, so $p \wedge q = p$. This implies $p \ll q$, contradiction.
\end{proof}

\begin{defn}
    Let $G$ be a group, and let $G \curvearrowright A$ be an action via automorphisms. We induce an action $G \curvearrowright \eta(A)$ by translation of parameters: for $p = (U_p, V_p, W_p) \in \eta(A)$, we define $g \cdot p = (g \cdot U_p,\, g \cdot V_p,\, g \cdot W_p)$. It is immediate that $(g \cdot U_p,\, g \cdot V_p,\, g \cdot W_p)$ is a valid tuple. (It is also straightforward to see that the action $G \curvearrowright \eta(A)$ is continuous, though we shall not use this.)
\end{defn}

\begin{defn} \label{lambda and mu}
    Let
    \begin{align*}
        \lambda(A) &= \{p \in \eta(A) \mid p < a \,\vee\, p \ic a \text{ for all } a \in A\},\\
        \mu(A) &= \{p \in \eta(A) \mid p > a \,\vee\, p \ic a \text{ for all } a \in A\}.
    \end{align*}

    Observe that $\lambda(A)$, $\mu(A)$ are closed subsets of $\eta(A)$ (it is easy to see that their complements are open) and are closed under meets and joins, as can be seen from the explicit valid triples for the meet and the join given in the proof of \Cref{lem:join-and-meet}. 
    
    Also note that for $p \in \lambda(A)$, we have $p = (\emp, V_p, W_p)$ and so $V_p^- = V^{}_p$.
\end{defn}

\begin{defn} \label{shift maps}
    Let $\sigma^+ \colon \lambda(A) \to \mu(A)$ denote the \emph{shift map} $\sigma^+(\emp, V, W) = (V, W, \emp)$, and let $\sigma^-$ be the inverse of $\sigma^+$.
    
    It is easy to see that $\sigma^+$ and $\sigma^-$ are $\Aut(A)$-equivariant homeomorphisms which preserve the partial order $\ll$. It is also straightforward to see using \Cref{validity check} that for $p \in \lambda(A)$, we have that $(p, \sigma^+(p))$ is $<$-valid and $\ic$-valid.
\end{defn}

\begin{lem} \label{not upper limit not lower limit}
    Let $p \in \lambda(A)$.
    \begin{enumerate}[label=(\roman*)]
        \item There is $V_0 \fin A$ with $V_0^-=V_p$ iff $p$ is not an upper limit of $\lambda(A)$.
        \item There is $W_0 \fin A$ with $W_0^+=W_p$ iff $p$ is not a lower limit of $\lambda(A)$.
    \end{enumerate}
\end{lem}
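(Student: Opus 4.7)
The key observation is that for $p, r \in \lambda(A)$ both have empty $U$-component, so by \Cref{validity check} the relation $r \ll p$ collapses to $V_r \subsetneq V_p$, equivalently $W_r \supsetneq W_p$. Moreover, membership in $\lambda(A)$ forces $V_p$ to be downward-closed in $A$ (as noted in \Cref{lambda and mu}), and dually $W_p = A \setminus V_p$ is upward-closed in $A$. Both parts of the lemma reduce to carefully exploiting these closure properties.

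For (1)($\Leftarrow$), given $V_0 \fin A$ with $V_0^- = V_p$, I take the basic neighbourhood $\langle \emp, V_0, \emp \rangle$ of $p$ over the parameter set $V_0$: any $r \in \lambda(A)$ in this set satisfies $V_0 \sub V_r$ with $V_r$ downward-closed, forcing $V_r \supseteq V_0^- = V_p$ and ruling out $r \ll p$. For (1)($\Rightarrow$), given an arbitrary basic neighbourhood $O = \langle p|_{A_0} \rangle$ of $p$, I set $V_1 = A_0 \cap V_p$; by hypothesis $V_1^- \subsetneq V_p$. I then exhibit the witness $r = (\emp, V_1^-, A \setminus V_1^-)$ and verify: (i) $r$ is a valid triple --- the only non-trivial clause $\neg(w < v)$ for $w \notin V_1^-$ and $v \in V_1^-$ is automatic from downward-closedness of $V_1^-$; (ii) $r \in \lambda(A)$ since $U_r = \emp$; (iii) $r|_{A_0} = p|_{A_0}$, where for $a \in A_0 \cap W_p$ the upward-closedness of $W_p$ prevents $a \in V_1^-$ (else $a \leq b \in V_1 \sub V_p$ with $a > p$ would force $b > p$, contradicting $b \in V_p$); and (iv) $V_r \subsetneq V_p$, so that $r \ll p$.

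Part (2) is proved by the dual pattern. For ($\Leftarrow$), use the basic open set $\langle \emp, \emp, W_0 \rangle$, where upward-closedness of $W_r$ for $r \in \lambda(A)$ forces $W_r \supseteq W_0^+ = W_p$ and precludes $W_r \subsetneq W_p$. For ($\Rightarrow$), set $W_1 = A_0 \cap W_p$ and take $r = (\emp, A \setminus W_1^+, W_1^+)$, with verifications dualising those above. The argument has no deep obstacle; the one point demanding a little care is confirming that the constructed $r$ is a valid triple agreeing with $p$ on \emph{all} of $A_0$, rather than merely on $A_0 \cap V_p$ or $A_0 \cap W_p$, and this is exactly where the downward/upward closure of $V_p$ and $W_p$ does the essential work.
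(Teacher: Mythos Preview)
Your proof is correct and follows essentially the same route as the paper's: both use the basic open set $\langle \emp, V_0, \emp \rangle$ (resp.\ $\langle \emp, \emp, W_0 \rangle$) for one direction, and both construct the witness $(\emp, V_1^-, A \setminus V_1^-)$ (resp.\ $(\emp, A \setminus W_1^+, W_1^+)$) for the other. The only cosmetic differences are that you have swapped the labels $\Rightarrow$ and $\Leftarrow$ relative to the statement (the paper reads ``exists $V_0$'' as the left-hand side, so its $\Rightarrow$ is your $\Leftarrow$), and you prove the harder direction by contrapositive rather than directly; mathematically nothing changes, and your added verification that the constructed $r$ agrees with $p$ on all of $A_0$ is a welcome elaboration of a step the paper leaves implicit.
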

\begin{proof} \hfill
    \begin{enumerate}[label=(\roman*)] \setlength{\parindent}{0pt}
        \item $\Rightarrow$: Let $O = \langle \emp, V_0, \emp \rangle$. We have $p \in O$ as $V_0 \sub V_p$. Suppose there exists $q \in O \cap \lambda(A)$ with $q \ll p$. As $q \ll p$ we have $U_q=\emp$ and $V_q \sub V_p$, but as $q\in O$ we have $V_q \supseteq V_0^- = V_p$, contradiction. So $p$ is not an upper limit of $\lambda(A)$.
        
        $\Leftarrow$: Let $O = \langle \emp, V_0, W_0 \rangle$ be an open set containing $p$ such that there is no $q \in O \cap \lambda(A)$ with $q \ll p$. Let $q = (\emp, V_0^-, A\setminus V_0^-)$. Then $q\in O\cap \lambda(A)$ and $(q,p)$ is $<$-valid. So $q=p$ and thus $V_0^-=V_p$.
        \item $\Rightarrow$: Let $O = \langle\emp, \emp, W_0\rangle$. We have $p \in O$ as $W_0 \sub W_p$. Suppose there exists $q \in O \cap \lambda(A)$ with $q \gg p$. As $q \gg p$ we have $W_q \sub W_p$, and $W_q \supseteq W_0^+ = W_p$, so $W_q=W_p$. But $U_q=\emp$ so $p=q$, contradiction. Thus $p$ is not a lower limit in $\lambda(A)$. 
        
        $\Leftarrow$: Let $O=\langle \emp, V_0, W_0 \rangle$ be an open set containing $p$ such that there is no $q\in O\cap \lambda(A)$ with $q\gg p$. Let $q = (\emp, A\setminus W_0^+, W_0^+)$. Then $q\in O\cap \lambda(A)$ and $(q,p)$ is $>$-valid. So we have $q=p$ and $W_0^+=W_p$. \qedhere 
    \end{enumerate}
\end{proof}
    
\begin{prop} \label{A-fixed upper limit}
    There is $p \in \lambda(A)$ such that the following hold:
    \begin{itemize}
        \item $p$ is an upper limit or a lower limit of $\lambda(A)$;
        \item $p$ is a fixed point of the action $\Aut(A) \curvearrowright \eta(A)$.
    \end{itemize}
\end{prop}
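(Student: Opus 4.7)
The plan is first to consider the two obvious $\Aut(A)$-fixed points of $\lambda(A)$, namely $p_\emp := (\emp, \emp, A)$ and $p_A := (\emp, A, \emp)$. By \Cref{not upper limit not lower limit}, $p_\emp$ is a lower limit iff $A \neq W_0^+$ for every finite $W_0 \sub A$, and $p_A$ is an upper limit iff $A \neq V_0^-$ for every finite $V_0 \sub A$. If either of these conditions holds we are immediately done.

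Otherwise, in particular $A = W_0^+$ for some finite $W_0$, so $\min(A)$ (being contained in $W_0$) is finite, nonempty, and $\Aut(A)$-invariant. The plan is then to iteratively add ``bottom layers'': set $L_0 := \emp$ and $L_{n+1} := L_n \cup \min(A \setminus L_n)$. Each $L_n$ is $\Aut(A)$-invariant, and an easy induction shows it is down-closed, since any element strictly below a minimal element of $A \setminus L_n$ must already lie in $L_n$. If at some stage $\min(A \setminus L_n)$ is infinite, or is empty while $A \setminus L_n \neq \emp$, then $A \setminus L_n$ has either infinitely many minimal elements or none, and so cannot be written as $W_0^+$ for any finite $W_0$; by \Cref{not upper limit not lower limit}, $p_{L_n}$ is a lower limit and we are done.

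In the remaining subcase, every $\min(A \setminus L_n)$ is a finite nonempty antichain, so the $L_n$ form a strictly increasing chain of finite $\Aut(A)$-invariant down-closed subsets (necessarily strictly contained in $A$, since otherwise $A$ would be a finite union of finite antichains). Setting $L_\omega := \bigcup_n L_n$, an infinite $\Aut(A)$-invariant down-closed set, I claim that $p_{L_\omega}$ is an upper limit: if instead $L_\omega = V^-$ for some finite $V \sub A$, then $V \sub L_r$ for some $r < \omega$ (by finiteness of $V$), yielding $L_\omega = V^- \sub L_r^- = L_r$ and contradicting the fact that $L_\omega$ strictly contains every $L_r$. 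The main technical subtlety lies exactly in this last subcase, where neither $p_\emp$ nor $p_A$ provides a limit and naive $\Aut(A)$-invariant candidates such as $\min(A)$ itself may fail to be limits; the ordinal-style iteration up to $L_\omega$ is what produces the desired $\Aut(A)$-fixed limit.
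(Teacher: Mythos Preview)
Your proof is correct. It rests on the same underlying invariant as the paper's --- your layers satisfy $L_n = \{a \in A : c_a \leq n\}$, where $c_a$ (the paper's invariant) is the supremum of lengths of descending chains from $a$ --- but the case analysis is organised differently. The paper splits on whether $\sup_a c_a = \infty$: if so, it takes $V = \{a : c_a < \infty\}$ (your $L_\omega$) and shows the resulting type is an upper or lower limit via \Cref{not upper limit not lower limit}; if not, it picks a level $n$ with $\{a : c_a = n\}$ infinite and uses $V = \{a : c_a \leq n\}$, obtaining an upper limit from the infinite antichain in $V$. You instead first test the extreme types $p_\emp, p_A$ (the $p_A$ check is in fact never used, and the $p_\emp$ check is subsumed by the $n=0$ step of your iteration, so both are redundant but harmless), then iterate and stop early whenever a layer is infinite or missing, producing a \emph{lower} limit at $p_{L_n}$; only when every layer is finite and nonempty do you pass to $L_\omega$ and obtain an upper limit. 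So in the overlap region (some $\{c_a = n\}$ infinite) you and the paper actually output different types --- yours a lower limit at level $n-1$, the paper's an upper limit at level $n$. Both arguments are short; the paper's is marginally more compact, while yours makes the layer-by-layer picture more explicit.
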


\begin{proof}
    For $a \in A$, we write $c_a \in \mathbb N \cup \{\infty\}$ for the supremum of the lengths of the descending chains in $A$ starting at $a$. We split the proof into two cases: $\sup_{a \in A} c_a = \infty$ or $\sup_{a \in A} c_a < \infty$. (Note that in the first case $A$ contains arbitrarily long finite chains, but it is possible for $A$ not to contain an infinite chain.)

    Suppose $\sup_{a \in A} c_a = \infty$. Let $V = \{a \in A \mid c_a < \infty\}$ and $W = \{a \in A \mid c_a = \infty\}$. Then for $v \in V$ there is no $w \in W$ with $w < v$, as otherwise $v$ would begin arbitrarily long finite descending chains. So $(\emp, V, W)$ is a valid triple: denote the corresponding type by $p$. We have $p \in \lambda(A)$, and it is clear that $p$ is $\Aut(A)$-fixed. Suppose for a contradiction that $p$ is neither an upper nor a lower limit of $\lambda(A)$. Then by \Cref{not upper limit not lower limit} there are $V_0 \fin V$, $W_0 \fin W$ with $V_0^- = V$, $W_0^+ = W$. Letting $n = \max\{c_{v_0} \mid v_0 \in V_0\}$, we have $c_v \leq n$ for all $v\in V$. By assumption $\sup_{a \in A} c_a = \infty$, so $W\neq \emp$. Let $w_0 \in W_0$ be a minimal element of $W_0$ and consider a descending chain of length $n+2$ starting at $w_0$. Let $a \in A$ be the second element of this chain. Since $w_0 > a$ and $w_0$ was minimal in $W_0$, we have $a \in V$, but $c_a \geq n+1$, contradiction.

    We now consider the case $\sup_{a \in A} c_a<\infty$. There is $n \in \N$ with $\{a \in A \mid c_a = n\}$ infinite. (Note that there may be multiple such $n$ -- we choose one.) Let $V = \{a \in A \mid c_a \leq n\}$ and $W = A \setminus V$. Then $(\emp, V, W)$ is a valid triple; denote its corresponding type by $p$. It is clear that $p \in \lambda(A)$ and that $p$ is $\Aut(A)$-fixed. Then $\{a \in A \mid c_a = n\} \sub V$ is an infinite set of pairwise incomparable elements, and so $V$ does not contain a finite cofinal set. Thus, by \Cref{not upper limit not lower limit}, we have that $p$ is an upper limit of $\lambda(A)$. 
\end{proof}

\subsection{Opposite posets}

\begin{defn}
    Let $P$ be a poset. We define the poset $P^\op$ to have the same domain as $P$, and define $<^{P^\op}$ by: $u <^{P^\op} v \Leftrightarrow u >^P v$.

    Let $p \in \eta(A)$. Then we define $p^\op \in \eta(A^\op)$ by $p^\op = (W_p, V_p, U_p)$. (It is easy to check that this is a valid tuple for $A^\op$.) It is also straightforward to see that $p \mapsto p^\op$ is a homeomorphism $\eta(A) \to \eta(A^\op)$, equivariant under the action of $\Aut(A) = \Aut(A^\op)$.

    It is likewise straightforward to check that, for $p, q \in \eta(A)$, we have that $(p, q)$ is $<$-valid over $A$ iff $(p^\op, q^\op)$ is $>$-valid over $A^\op$. So, defining the poset relation $\ll$ on $\eta(A^\op)$ exactly as for $\eta(A)$ (see \Cref{validity poset}), we have that $p^\op \ll q^\op$ in $\eta(A^\op)$ iff $p \gg q$ in $\eta(A)$.
\end{defn}

We will use the below lemma in our main construction in \Cref{s: main constr} to ensure that we need only consider upper limits.

\begin{lem} \label{upper limit from lower}
    Let $p$ be an $\Aut(A)$-fixed lower limit of $\lambda(A)$. Then $p^\op$ is an $\Aut(A^\op)$-fixed upper limit of $\mu(A^\op)$.
\end{lem}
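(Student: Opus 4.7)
The plan is to prove this essentially as a direct translation using the bijection $p \mapsto p^{\op}$ between $\eta(A)$ and $\eta(A^{\op})$, exploiting each of the compatibility properties of this bijection noted just before the lemma.

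First I would check membership: since $p \in \lambda(A)$, we have $U_p = \emp$, and therefore $W_{p^{\op}} = U_p = \emp$, which means $p^{\op}$ satisfies $p^{\op} \ic a$ or $p^{\op} >^{A^{\op}} a$ for every $a$; that is, $p^{\op} \in \mu(A^{\op})$. Equivalently, one can observe that applying $\op$ twice on the ambient poset gives back $A$, and the condition defining $\lambda$ on $A$ is exactly the condition defining $\mu$ on $A^{\op}$.

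Next, $\Aut(A) = \Aut(A^{\op})$ as sets (an order-preserving bijection of $A$ is the same as an order-preserving bijection of $A^{\op}$), and the excerpt already records that $p \mapsto p^{\op}$ is $\Aut(A)$-equivariant. Therefore $g \cdot p = p$ for all $g \in \Aut(A)$ immediately yields $g \cdot p^{\op} = p^{\op}$ for all $g \in \Aut(A^{\op})$, giving the fixed-point condition.

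Finally, for the limit condition, I would translate through the homeomorphism. Let $O'$ be an open neighbourhood of $p^{\op}$ in $\eta(A^{\op})$. Since $q \mapsto q^{\op}$ is a homeomorphism $\eta(A) \to \eta(A^{\op})$, the preimage $O = \{q \in \eta(A) \mid q^{\op} \in O'\}$ is an open neighbourhood of $p$ in $\eta(A)$. Because $p$ is a lower limit of $\lambda(A)$, there exists $r \in \lambda(A) \cap O$ with $r \gg p$ in the order $\ll$ on $\eta(A)$. By the first step, $r^{\op} \in \mu(A^{\op}) \cap O'$, and by the order-reversal property recorded in the excerpt, $r \gg p$ in $\eta(A)$ is equivalent to $r^{\op} \ll p^{\op}$ in $\eta(A^{\op})$. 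This witnesses that $p^{\op}$ is an upper limit of $\mu(A^{\op})$.

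There is no real obstacle here; the only thing to be careful about is keeping the dualities straight ($\lambda \leftrightarrow \mu$, $\ll \leftrightarrow \gg$, upper limit $\leftrightarrow$ lower limit), all of which are packaged into properties of $p \mapsto p^{\op}$ already established in the preceding definition.
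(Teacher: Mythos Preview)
Your proof is correct and is exactly the unpacking of definitions that the paper has in mind; the paper itself simply states that the lemma is immediate. Your three-step breakdown (membership in $\mu(A^\op)$ via $W_{p^\op}=U_p=\emp$, equivariance of $p\mapsto p^\op$ for the fixed-point condition, and transport of the limit property through the order-reversing homeomorphism) is the intended argument.
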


The proof of the above lemma is immediate.

\section{Sets of moieties} \label{s: moieties}

In the proof of our main theorem, \Cref{main thm}, we will attach a poset $R \cup S \cup T$ to our original countable poset $A$ which contains an infinite antichain $S$. Our construction will use particular collections $\Sigma$, $\Sigma'$ of subsets of $S$ satisfying certain useful properties. In this section we show the existence of these collections of subsets, before beginning the main proof.

\begin{defn}
    Let $S$ be a countable infinite set, and let $S' \sub S$. We say that $S'$ is a \emph{moiety}\footnote{A term due to P.\ Neumann; cf.\ French \emph{moiti\'{e}.}} of $S$ if $S'$ and $S \setminus S'$ are infinite.
\end{defn}

Now we prove the main lemma of this section which is key to the main construction in the next section.

\begin{lem}\label{build Sigmas}
    Let $S$ be an infinite set. There are infinite sets $\Sigma, \Sigma' \sub \mc{P}(S)$ of moieties of $S$ satisfying the following:
    \begin{enumerate}[label=(\roman*)]
        \item\label{ZinSigma} for $\mc{U}, \mc{W} \fin \Sigma$, $\mc{V} \fin \Sigma'$ and $C, D \fin S$ such that 
        \[\textstyle
        C \cup \bigcup \mc{U} \sub \bigcap \mc{W} \text{ and } (C \cup \bigcup \mc{U}) \cap (D \cup \bigcup \mc{V}) = \emp,
        \] 
        there is $Z \in \Sigma$ satisfying 
        \[\textstyle    
        C \cup \bigcup \mc{U} \sub Z \sub \bigcap \mc{W} \text{ and } Z \cap (D \cup \bigcup \mc{V}) = \emp.
        \]
        If $\mc{U} \cap \mc{W} = \emp$, there are infinitely many such $Z$.
        \item\label{ZinSigma'} for $\mc{U}, \mc{W} \fin \Sigma'$, $\mc{V} \fin \Sigma$ and $C, D \fin S$ such that 
        \[\textstyle
        C \cup \bigcup \mc{W} \sub \bigcap \mc{U} \text{ and } (C \cup \bigcup \mc{W}) \cap (D \cup \bigcup \mc{V}) = \emp, 
        \]
        there is $Z \in \Sigma'$ satisfying 
        \[\textstyle
        C \cup \bigcup \mc{W} \sub Z \sub \bigcap \mc{U} \text{ and } Z \cap (D \cup \bigcup \mc{V}) = \emp.
        \]
        If $\mc{U} \cap \mc{W} = \emp$, there are infinitely many such $Z$.
    \end{enumerate}
    (Here, we follow the usual empty intersection convention: for $\mc{T} \sub \mc{P}(S)$, we define $\bigcap \mc{T} = S$ if $\mc{T} = \emp$.)
\end{lem}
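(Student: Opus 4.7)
The plan is to construct $\Sigma = \{A_n : n \in \N\}$ and $\Sigma' = \{B_n : n \in \N\}$ recursively, enumerating all possible requests and satisfying each. A \emph{request} is a tuple $(t, u, w, v, C, D)$ with $t \in \{1,2\}$ indicating whether property (\ref{ZinSigma}) or (\ref{ZinSigma'}) is to be addressed, $u, w, v \fin \N$ (to be interpreted as indices into the constructed families, in accordance with $t$), and $C, D \fin S$. I fix an enumeration $(\rho_n)_{n \in \N}$ of all such requests in which each request appears infinitely often. At stage $n$, I consult $\rho_n$: setting $\mathcal{U}, \mathcal{W}, \mathcal{V}$ to be the corresponding collections of previously-constructed moieties (e.g., for $t = 1$: $\mathcal{U} = \{A_i\}_{i \in u}$, $\mathcal{W} = \{A_i\}_{i \in w}$, $\mathcal{V} = \{B_i\}_{i \in v}$), if the indices lie in $\{0, \ldots, n-1\}$ and the corresponding hypothesis of (\ref{ZinSigma})/(\ref{ZinSigma'}) holds, I split into cases. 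If $\mathcal{U} \cap \mathcal{W} = \emp$, I add a new moiety $Z$ realizing the request as $A_n$ (if $t=1$) or $B_n$ (if $t=2$). If $\mathcal{U} \cap \mathcal{W} \neq \emp$, any $Z$ satisfying the conclusion must equal the unique $Z_0 \in \mathcal{U} \cap \mathcal{W}$, and the hypothesis directly implies $Z_0$ already works, so no new moiety is needed. Vacant slots (the other of $A_n, B_n$, as well as both slots when $\rho_n$ cannot be addressed) are filled with ``default'' fresh moieties preserving the invariant below. Since each request repeats infinitely often, the ``infinitely many $Z$'' requirement in the $\mathcal{U} \cap \mathcal{W} = \emp$ case is obtained automatically across the construction.

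The central step is the construction of $Z$, for which I maintain throughout the invariant (I): \emph{for the current family $\mathcal{F}$ of constructed moieties and any disjoint finite $I, J \subseteq \mathcal{F}$, the Boolean combination $\bigcap_{X \in I} X \setminus \bigcup_{Y \in J} Y$ is either forced empty by the containment and disjointness relations imposed at previous stages, or else is infinite.} Given (I) and the hypotheses, I partition the atoms of the subalgebra of $\mathcal{P}(S)$ generated by $\mathcal{F}$ into three types: (a) atoms contained in $L := C \cup \bigcup \mathcal{U}$; (b) atoms contained in $F := (S \setminus \bigcap \mathcal{W}) \cup D \cup \bigcup \mathcal{V}$; (c) the remaining ``free'' atoms. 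Since $\mathcal{U} \cap \mathcal{W} = \emp$ by assumption and $\mathcal{W} \cap \mathcal{V} = \emp$ because $\mathcal{W}, \mathcal{V}$ lie in the disjoint families $\Sigma, \Sigma'$, invariant (I) yields that the union of (c)-atoms, equal to $\bigcap \mathcal{W} \setminus (\bigcup \mathcal{U} \cup \bigcup \mathcal{V})$, is infinite. I then define $Z$ to include each (a)-atom fully, exclude each (b)-atom, and on each (c)-atom $A$ take an infinite, coinfinite subset of $A$ containing $A \cap C$ and avoiding $A \cap D$. The resulting $Z$ is a moiety of $S$ with $L \subseteq Z \subseteq S \setminus F$, so it satisfies the conclusion; the freedom in the splittings allows $Z$ to be chosen distinct from all previously constructed moieties.

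The main obstacle is maintaining invariant (I) throughout the construction. Adding $Z$ introduces forced-empty Boolean combinations (e.g., $A \cap (S \setminus Z) = \emp$ whenever $A$ is an (a)-atom), but these are precisely the consequences of the newly-imposed relations $L \subseteq Z$ and $Z \cap F = \emp$; any combination not forced empty decomposes into pieces cut out of free atoms by $Z$, each infinite by our infinite-coinfinite choice. Verifying this preservation carefully---and analogously when filling vacant slots with default moieties---is the technical heart of the argument.
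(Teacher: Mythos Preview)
Your recursive approach is quite different from the paper's. The paper realises $\Sigma$ and $\Sigma'$ all at once via a Fra\"{i}ss\'{e} limit: it takes the Fra\"{i}ss\'{e} limit $N$ of finite posets partitioned into three levels $N_0 < N_1 < N_2$ (with $N_1$ an antichain identified with $S$), sets $\Sigma = \{v^- \cap S : v \in N_2\}$ and $\Sigma' = \{u^+ \cap S : u \in N_0\}$, and then derives properties (\ref{ZinSigma}) and (\ref{ZinSigma'}) by translating each request $(\mc{U},\mc{V},\mc{W},C,D)$ into a finite valid triple in $N$ and invoking the one-point extension property. This is slick because the Fra\"{i}ss\'{e} machinery absorbs all the bookkeeping; your elementary recursion, if it worked, would avoid that dependency.

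However, there is a real gap at the heart of your argument. You claim that invariant (I) yields that $\bigcap \mc{W} \setminus (\bigcup \mc{U} \cup \bigcup \mc{V})$ is infinite, citing only $\mc{U} \cap \mc{W} = \emp$ and $\mc{V} \cap \mc{W} = \emp$. But your invariant says the combination is ``forced empty \emph{or} infinite''; disjointness of $I$ and $J$ does not rule out ``forced empty''. Indeed your own construction deliberately imposes containments like $U_i \subseteq Z$ within $\Sigma$, so $U_i \setminus Z$ is forced empty with $\{U_i\} \cap \{Z\} = \emp$. What you actually need is that no chain of previously imposed relations collapses $\bigcap \mc{W}$ into $\bigcup \mc{U} \cup \bigcup \mc{V}$ under the request's hypotheses. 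This is plausible --- heuristically because the only cross-relations between $\Sigma$ and $\Sigma'$ you ever impose are disjointness relations, never containments, and within $\Sigma$ the hypothesis $\bigcup\mc{U}\subseteq\bigcap\mc{W}$ together with $\mc{U}\cap\mc{W}=\emp$ should block a collapse --- but it is a genuine combinatorial lemma about the poset of imposed relations that you have not stated or proved, and ``forced empty'' is never defined precisely enough to carry it. Without this, you have not established the existence of even one (c)-atom, and the construction of $Z$ (hence its being a moiety, and the maintenance of (I) itself) cannot proceed.
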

\begin{proof}
    Let $\mc{L}_{\mc{PO}} = \{<\}$ be the language of posets, and let $\mc{L} = \mc{L}_{\mc{PO}} \cup \{\chi_0, \chi_1, \chi_2\}$ with $\chi_i$ a unary relation symbol for $i \leq 2$. For an $\mc{L}$-structure $N$, we write $N^{}_i = \chi_i^{N}$, and we say that $N$ satisfies condition $(\ast)$ if the following hold:
    \begin{itemize}
        \item $<^N$ is a partial order and $\{N_0, N_1, N_2\}$ is a partition of $N$;
        \item for $n_0 \in N_0$, $n_1 \in N_1$, $n_2 \in N_2$, the following three conditions hold:
        \begin{align*}
            n_0 &< n_1 \ou n_0 \ic n_1,\\
            n_0 &< n_2 \ou n_0 \ic n_2,\\
            n_1 &< n_2 \ou n_1 \ic n_2;
        \end{align*}
        \item for distinct $n^{}_1, n'_1 \in N_1$, we have $n^{}_1 \ic n'_1$.
    \end{itemize}

    Let $\mc{K}$ be the class of finite $\mc{L}$-structures satisfying conditions $(\ast)$. It is straightforward to check that $\mc{K}$ is an amalgamation class; we denote its \Fr limit by $N$. Of course, $N$ satisfies condition $(\ast)$, and it is easy to check that the $\mc{L}_{\mc{PO}}$-reduct of $N$ is the generic poset (for any finite valid triple, we may find a witness for it in $N_0 \cup N_2$). Also, we have that $N_1$ is an infinite antichain. We may thus identify $N_1$ with  $S$, and then we have $N_0 = S^- \setminus S$, $N_2 = S^+ \setminus S$. (Recall \Cref{+ and - closure}.)

    It is easy to check (using the extension property of $N$) that $N$ satisfies the following for all distinct $u, v \in N$:
    \begin{itemize}
        \item if $u, v \in S^-$, then $u^+ \cap S \neq v^+ \cap S$ and we have $(v^+ \cap S \sub u^+ \cap S) \Rightarrow u < v$;
        \item if $u, v \in S^+$, then $u^- \cap S \neq v^- \cap S$ and we have $(u^- \cap S \sub v^- \cap S) \Rightarrow u < v$;
        \item if $u \in S^-$, $v \in S^+$ and $u < v$, then $u^+ \cap v^- \cap S \neq \emp$.
    \end{itemize}

    Let $\Sigma = \{v^- \cap S \mid v \in S^+ \setminus S\}$ and $\Sigma' = \{v^+ \cap S \mid x \in S^- \setminus S\}$. It is easy to see that $\Sigma$, $\Sigma'$ are infinite sets of moieties of $S$.

    We now prove part (i). If $\mc{U} \cap \mc{W} \neq \emp$, then $C \cup \bigcup \mc{U} = \bigcap \mc{W}$, so take $Z = C \cup \bigcup \mc{U}$.

    Now suppose $\mc{U} \cap \mc{W} = \emp$. By the definition of $\Sigma, \Sigma'$, there are $U, W \fin S^+ \setminus S$, $V \fin S^- \setminus S$ such that $\mc{U} = \{ u^- \cap S \mid u \in U\}$, $\mc{V} = \{ v^+ \cap S \mid v \in V\}$, $\mc{W} = \{ w^- \cap S \mid w \in W\}$. We have $U \cap W = \emp$.

    We now show that $(U, V, W)$ is a valid triple. As $\bigcup \mc{U} \sub \bigcap \mc{W}$, for $u \in U$, $w\in W$ we have $u^- \cap S \sub w^- \cap S$, so $u < w$. As $\bigcup \mc{U} \cap \bigcup \mc{V} = \emp$, for $u \in U$, $v \in V$ we have $u^- \cap v^+ \cap S = \emp$ and so $\neg(u > v)$. For $v \in V$, $w \in W$, as $v \in S^- \setminus S$, $w \in S^+ \setminus S$ and $S$ is an antichain, we also have $\neg(v > w)$. So $(U, V, W)$ is a valid triple of a finite subset of $N$.
    
    Moreover, since $(C\cup \bigcup \mc{U}) \cap (D \cup \bigcup \mc{V}) = \emp$ and $C \sub \cap \mc{W}$, we also have that $(C\cup U, D\cup V, W)$ is a valid triple of a finite subset of $N$. There are infinitely many $z \in S^+ \setminus S$ whose type is compatible with this finite triple, giving infinitely many $Z = z^- \cap S$ satisfying $C \cup \bigcup \mc{U} \sub Z \sub \bigcap \mc{W}$ and $Z \cap (D \cup \bigcup \mc{V}) = \emp$ as required.
    
    The proof of (ii) is analogous to that of (i): just swap $\mc{U}$ and $\mc{W}$ and flip the signs.
\end{proof}

\section{The main construction} \label{s: main constr}

In this section, we will prove the main result of this paper:

\begin{thm} \label{main thm}
    Let $M$ denote the generic poset, and let $A$ be a countably infinite poset. Then there exists an embedding $f : A \to M$ with coinfinite image such that each automorphism of $f(A)$ extends uniquely to an automorphism of $M$.
\end{thm}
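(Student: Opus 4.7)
The plan is to follow the general outline sketched in the introduction, which proceeds in roughly three phases: building the initial poset $M_0$ attached to $A$, extending it to a copy of the random poset via an acceptable chain, and then proving uniqueness of automorphism extension.

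First I would apply \Cref{A-fixed upper limit} to get a $G$-fixed (where $G = \Aut(A)$) element $p \in \lambda(A)$ that is an upper or lower limit of $\lambda(A)$; by passing to the opposite poset via \Cref{upper limit from lower} if necessary, I may assume $p$ is an upper limit. Then I would construct $M_0 = A \sqcup R \sqcup S \sqcup T$, where $S$ is a countable antichain whose every element realises the type $p$ over $A$, and $R$, $T$ are auxiliary pieces (below and above $S$ respectively) built using the moiety collections $\Sigma, \Sigma'$ produced by \Cref{build Sigmas}: concretely, for each $Z \in \Sigma'$ one introduces a point $r_Z \in R$ whose upward-$S$-trace is $Z$ (and analogously for $T$). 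The role of $R$ and $T$ is to provide, for every finite valid triple over $A \cup S$ of a certain shape, a witness inside $M_0$ itself, so that later layers will have nothing to add ``low down.'' Because $p$ is $G$-fixed and the constructions of $R, S, T$ are canonical, $G$ acts on $M_0$ extending its action on $A$ (with $R, S, T$ fixed pointwise, say).

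Next I would inductively build a chain $M_0 \sub M_1 \sub \cdots$ of $G$-posets as described in the introduction: at stage $k$, for each finite valid triple over $M_{k-1}$ appearing in a suitable enumeration, I add a $G$-orbit of witnesses. This must be done carefully (the notion of ``acceptable chain'' referenced above) so that the action of $G$ extends at each step and so that no witness is added whose existence would later violate the rigidity needed for uniqueness. Setting $P = \bigcup_k M_k$, the usual back-and-forth verification (every finite valid triple in $P$ has a witness) shows $P \cong M$, and the embedding $f \colon A \hookrightarrow P$ automatically satisfies that every $g \in \Aut(A)$ extends to $\Aut(P)$ via the induced $G$-action.

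The hard part is uniqueness. I would prove this in three successive lemmas, each constraining an arbitrary $h \in \Aut(P)$ with $h|_A = \id_A$ more and more. First, a ``minimality'' lemma: $h$ fixes $M_0$ pointwise. This is where the choice of $p$ as a $G$-\emph{fixed limit point} is essential -- the fact that $p$ has no finite defining set among its $U$/$V$/$W$ partition (by \Cref{not upper limit not lower limit}) rules out the ad hoc automorphisms that would otherwise permute points of $S$, and then the moiety properties of $\Sigma, \Sigma'$ (condition that $\mc U \cap \mc W \neq \emp$ forces $Z = C \cup \bigcup \mc U$) pin down $R$ and $T$ from $S$. Second, one shows $h$ fixes the upward closure $S^+$ pointwise, using the witnesses built into $M_0$ via $\Sigma, \Sigma'$ to detect elements of $S^+$ by their traces on $S$. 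Third, one shows that any automorphism fixing a ``$+$-closed'' substructure pointwise must fix all of $P$, a general structural fact about the random poset. Combining these yields $h = \id_P$ and hence uniqueness of extension. The first of these three steps is the main obstacle: it requires the limit-point analysis of \Cref{s: limits} and the moiety combinatorics of \Cref{s: moieties} to work together exactly right, which is why those two sections were developed in such detail before the main construction.
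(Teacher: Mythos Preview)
Your high-level outline matches the paper's, but several concrete details of the construction are wrong, and one of these errors creates a genuine gap in the uniqueness argument.

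First, the description of $M_0$ is off in multiple ways. In the paper the antichain $S$ realises the type $q = \sigma^+(p) \in \mu(A)$, not $p$; it is $R$ that realises $p$. The moiety collections $\Sigma, \Sigma'$ are \emph{not} used to build $M_0$ at all --- they enter only later, in the definition of acceptable pairs (conditions (AC3), (AC4)) governing which one-point extensions may be added at each stage. The set $T$ is not ``above $S$''; every element of $T$ is incomparable to every element of $S$, and $T$ is indexed by $V_p$ with each $t_a$ sitting just above $a^- \cap V_p$ inside $A$. Most importantly, $G$ does \emph{not} fix $T$ pointwise: the action is $g \cdot t_a = t_{ga}$, and this nontrivial action is essential (see below). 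What makes $R \cup S$ useful is not a moiety structure but a hand-built rigid partial order ($r_i < s_j$ iff $i \ge j$), so that once $R \cup S$ is known to be setwise-fixed it is automatically pointwise-fixed.

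The serious gap is in your second uniqueness step, ``$h$ fixes $S^+$ pointwise \ldots\ detect elements of $S^+$ by their traces on $S$.'' The $S$-trace $m^- \cap S$ only distinguishes elements lying in \emph{different} $G$-orbits (this is \Cref{ac props}(\ref{ac:s+diff})). Within a single layer $M_k \setminus M_{k-1}$ the new points form one $G$-orbit, and since $G$ fixes $S$ pointwise they all have the \emph{same} $S$-trace. So after you know $h$ fixes $M_0$ and preserves layers, you still only get $h(m) = gm$ for some $g \in G$, and you must argue $g \in G_m$. The paper handles this by arranging condition~(ii) (the ``stabiliser lemma'', \Cref{enough aps2}): before adding a witness for a triple with $U_0^- \cap S \neq \emp$, one first adds auxiliary points $e_i$ sitting above the $t'_{a_i}$ (built from $T$), so that the stabiliser of the eventual witness equals the stabiliser of its type over $M_0 \cup S^+_{k-1}$. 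This is precisely where the nontrivial $G$-action on $T$ is used --- without it there is nothing in $M_0 \cup S^+_{k-1}$ that records enough of the $A$-parameters to force $g \in G_m$. Your proposal, with $T$ fixed pointwise and no analogue of condition~(ii), has no mechanism to close this gap.

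A smaller but related issue: showing that $h$ fixes $S$ even \emph{setwise} is not as soft as you suggest. The paper needs the extra condition~(iii) on the acceptable chain (witnesses for certain acceptable pairs with $U_0^- \cap S = \emp$) to prove that $S$ is exactly the set of minimal realisations of $q$ in $P$; the limit-point property of $p$ alone gives only one direction of this (\Cref{lem:minimality}(\ref{S min in tp q})).
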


In the terminology of the introduction, the above theorem states that $\Ainf(M)$ is uniquely extensible for $M$ equal to the generic poset. We first show in the below lemma that to prove the main theorem it suffices to restrict attention to a smaller class of posets.

\begin{lem}
    Let $\mc{U}$ be the class of countably infinite posets $A$ such that $\lambda(A)$ contains an $\Aut(A)$-fixed upper limit. If every element of $\mc{U}$ admits a uniquely extensive embedding into $M$, then $\Ainf(M)$ is uniquely extensible.
\end{lem}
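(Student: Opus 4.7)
The strategy is to use the self-duality of the random poset to reduce to the case $A \in \mc{U}$. Since the class of finite posets is closed under taking opposites, we have $M \cong M^\op$; fix an anti-isomorphism $\psi : M \to M$. Composing with $\psi$ then gives a bijection between uniquely extensive embeddings of $A^\op$ into $M$ and uniquely extensive embeddings of $A$ into $M$ (unique extensibility transfers across conjugation by $\psi$). Hence it suffices to show that, for every countably infinite poset $A$, at least one of $A, A^\op$ lies in $\mc{U}$, and then apply the hypothesis to this poset and use $\psi$ if needed.

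To establish the dichotomy, first apply \Cref{A-fixed upper limit} to $A$ to obtain an $\Aut(A)$-fixed $p \in \lambda(A)$ that is an upper or lower limit. If $p$ is an upper limit, then $A \in \mc{U}$ and we are done. If $p$ is only a lower limit, then apply \Cref{A-fixed upper limit} to $A^\op$ (noting that $\Aut(A) = \Aut(A^\op)$ as groups acting on the same underlying set) to obtain an $\Aut(A^\op)$-fixed $q \in \lambda(A^\op)$ that is an upper or lower limit of $\lambda(A^\op)$; if $q$ is upper, then $A^\op \in \mc{U}$ and we conclude via $\psi$.

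The main obstacle will be the residual case where both applications return only lower limits. Examining the proof of \Cref{A-fixed upper limit}, this forces Case I of that proof to apply to both $A$ and $A^\op$: so $A$ has arbitrarily long ascending and descending chains and the finite-cofinal (resp.\ coinitial) conditions fail for the specific types constructed in that proof. In this case I would construct an Aut-fixed upper limit directly, without invoking \Cref{A-fixed upper limit}. By \Cref{not upper limit not lower limit}(1), the Aut-fixed type $(\emp, A, \emp) \in \lambda(A)$ is an upper limit iff $A$ has no finite cofinal subset, and symmetrically $(\emp, A, \emp) \in \lambda(A^\op)$ is an Aut-fixed upper limit iff $A$ has no finite coinitial subset; if either holds, we are done.

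The hardest sub-case is when $A$ has both finite cofinal and finite coinitial subsets; here my plan is to iteratively peel $\Aut(A)$-invariant extremal layers. Let $M \sub A$ be the $\Aut(A)$-invariant set of maximal elements of $A$, and set $V = A \setminus M$, which is downward-closed and Aut-invariant. If $V$ has no finite cofinal subset then $(\emp, V, M) \in \lambda(A)$ is the required Aut-fixed upper limit and $A \in \mc{U}$. Otherwise, iterate the peeling on $V$, and symmetrically try the analogous bottom-up peeling of $A^\op$. The most delicate technical step will be verifying that this peeling process — perhaps refined by working with Aut-orbit decompositions of $A$ and using the infinite antichain or infinite chain guaranteed by Dilworth's theorem applied to $A$ — is always guaranteed to terminate with a valid Aut-fixed upper limit on at least one side $A$ or $A^\op$.
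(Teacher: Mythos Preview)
Your overall strategy---use $M\cong M^\op$ and show that at least one of $A$, $A^\op$ lies in $\mc{U}$---is exactly right and matches the paper. But you have missed the one-line observation that makes the dichotomy immediate, and as a result your ``residual case'' analysis is both unnecessary and incomplete.

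The point you are missing is that a lower limit of $\lambda(A)$ can be \emph{directly} converted into an upper limit of $\lambda(A^\op)$: if $p$ is an $\Aut(A)$-fixed lower limit of $\lambda(A)$, then $p^\op$ is an $\Aut(A^\op)$-fixed upper limit of $\mu(A^\op)$ (this is \Cref{upper limit from lower}), and since the shift map $\sigma^-:\mu(A^\op)\to\lambda(A^\op)$ is an $\Aut(A^\op)$-equivariant order-preserving homeomorphism, $\sigma^-(p^\op)$ is an $\Aut(A^\op)$-fixed upper limit of $\lambda(A^\op)$. Hence $A^\op\in\mc{U}$, full stop. There is no need to reapply \Cref{A-fixed upper limit} to $A^\op$, and no residual case arises.

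Your proposed handling of the residual case is not a proof: the peeling procedure you sketch is not shown to terminate, and the appeal to Dilworth's theorem is vague. For instance, if $A$ has both a finite cofinal and a finite coinitial set, iteratively removing maximal (or minimal) layers may never produce a set without a finite cofinal subset---consider $A=\Z$, where every such layer is empty after the first step fails already since there are no maximal elements, yet $A$ has neither a finite cofinal nor coinitial set, so this example is actually fine; but more to the point, you give no argument covering all cases. The paper avoids all of this by the shift-map trick above.
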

\begin{proof}
    Let $A \in \Ainf(M) \setminus \mc{U}$. Then by \Cref{A-fixed upper limit} there exists an $\Aut(A)$-fixed lower limit $p$ of the set of types $\lambda(A)$. By \Cref{upper limit from lower} we have that $p^\op$ is an upper limit of $\mu(A^\op)$, so $\sigma^-(p^\op)$ is an upper limit of $\lambda(A^\op)$. So by assumption there is a uniquely extensive embedding $f : A^\op \to M$. But $M \cong M^\op$, so $A$ also admits a uniquely extensive embedding.
\end{proof}

We will therefore assume from now on that for our fixed countable poset $A$, there is an $\Aut(A)$-fixed upper limit in $\lambda(A)$. We fix one such upper limit and denote it by $p$. We let $q = \sigma^+(p)$. Then $q$ is an $\Aut(A)$-fixed upper limit of $\mu(A)$, and $(p, q)$ is $<$-valid and $\ic$-valid. (See \Cref{shift maps}.)

Our construction will begin by attaching a countable poset $R \cup S \cup T$ to $A$. We write $G = \Aut(A)$ for the remainder of this section. A \emph{$G$-poset} is a poset together with a $G$-action via automorphisms, and a \emph{$G$-poset embedding} of $G$-posets is a poset embedding which is $G$-equivariant.

\begin{defn} \label{d: M_0}
    Let $R = \{r_i \mid i < \omega\}$, $S = \{s_j \mid j < \omega\}$, $T = \{t_a \mid a \in V_p\}$ be antichains, with $R$ and $S$ countably infinite and the domains of $A, R, S, T$ disjoint. 
    
    We define a countable poset $M_0 = A \cup R \cup S \cup T$ by specifying the partial order relation between each of the posets $A, R, S, T$:
    \begin{itemize}
        \item each element of $R$ has type $p$ over $A$;
        \item each element of $S$ has type $q$ over $A$;
        \item $r_i < s_j$ if $i \geq j$ and $r_i \ic s_j$ if $i < j$;
        \item $t_a > v$ for $v \in V_p$ with $a \geq v$, and $t_a$ is incomparable to all other points of $M_0$.
    \end{itemize}

    Note that $R \cup S$ is a rigid poset (i.e.\ it has trivial automorphism group). Each element of $R$ is less than some element of $S$ (but is only less than finitely many elements of $S$), and each element of $T$ is not related to any element of $S$.
    
    We define a $G$-action on $M_0$ extending $G \curvearrowright A$ as follows: for each $g \in G$, $g$ fixes $R \cup S$ pointwise, and $g \cdot t_a = t_{ga}$ for $t_a \in T$. (Note that as $p, q$ are $G$-fixed, this does indeed give a $G$-action.) From now on, we consider $M_0$ as a $G$-poset with this action. Note that as $G \curvearrowright A$ is faithful, any $G$-poset embedding of this action will also be faithful.
\end{defn}

In the remainder of the paper, we will construct a $G$-poset $P$ extending $G \curvearrowright M_0$ such that $P$ is isomorphic to the generic poset. As $P$ will be a $G$-poset, we will immediately have that each automorphism of $A$ extends to an automorphism of $P$ -- but the key difficulty lies in guaranteeing uniqueness of automorphism extension, or equivalently that $\id_P$ is the only automorphism of $P$ extending $\id_A$. The following lemma will be crucial; it shows that, in the generic poset, the only automorphism fixing an upwards-closed set pointwise is the identity.

\begin{lem} \label{uniq on +-closed}
    Let $M$ be the generic poset, and let $v \in M$. Let $f \in \Aut(M)$ fix $v^+$ pointwise. Then $f = \id_M$.
\end{lem}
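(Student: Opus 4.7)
My plan is to argue by contradiction, using a two-stage enlargement of the fixed set of $f$. The key observation is that if $z \in M$ is fixed by $f$ and $z \mathrel{\rho} u$ for some relation $\rho \in \{<, >, \ic\}$, then by applying $f$ I obtain $z \mathrel{\rho} f(u)$. Combined with the universality of the random poset (which realises every consistent finite one-point extension type), this lets me derive a contradiction whenever $f(u) \neq u$: I just have to produce a fixed element $z$ whose relation to $u$ differs from its forced relation to $f(u)$.

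The main obstacle is that $v^+$ alone is insufficient to distinguish two candidate images $u, u' < v$: transitivity forces every $w \in v^+$ to satisfy $w \geq v > u$ and $w > u'$, so every element of $v^+$ lies uniformly above both. I will therefore first use $v^+$ to show that $f$ fixes every element incomparable to $v$, and only then, using the enlarged fixed set, tackle the elements strictly below $v$.

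\emph{Stage 1.} Suppose $u \ic v$ and let $u' := f(u)$, so also $u' \ic v$. Assuming $u \neq u'$, I would find $z \in M$ with $z > v$ (so $z \in v^+$ is fixed) realising either $\{z > u,\, z \ic u'\}$ (when $u \ic u'$ or $u < u'$) or $\{z \ic u,\, z > u'\}$ (when $u > u'$). In each sub-case the specified relations describe a consistent four-element poset on $\{u, u', v, z\}$: the crucial incomparabilities are unforced, since $v \ic u, u'$ prevents any chain through $v$ from constraining $z$'s relation to $u$ or $u'$. Then $f(z) = z$ and $f(u) = u'$ convert $z > u$ (resp.\ $z \ic u$) into $z > u'$ (resp.\ $z \ic u'$), contradicting the defining property of $z$.

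\emph{Stage 2.} Now $f$ fixes $v^+$ together with every element incomparable to $v$. For $u < v$ with $u' := f(u) \neq u$ (so also $u' < v$), I would likewise find $z$ with $z \ic v$ (hence fixed) realising, for instance, $\{z \ic v,\, z > u,\, z \ic u'\}$ (when $u \ic u'$ or $u < u'$), and the analogous choice when $u > u'$. The condition $z \ic v$ removes the transitivity obstruction that blocked $v^+$: no chain through $v$ involves $z$, so the specified relations of $z$ to $u$ and $u'$ are unforced and the resulting four-element poset is consistent. The same contradiction argument then closes the proof. The only remaining work is the routine verification that each four-element poset produced across the sub-cases is indeed consistent.
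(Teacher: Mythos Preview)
Your proof is correct and follows essentially the same two-stage argument as the paper: first show $f$ fixes every element incomparable to $v$ by producing a witness above $v$, then use this enlarged fixed set to handle elements below $v$ by producing a witness incomparable to $v$. The paper phrases the witness-finding in terms of valid triples (e.g.\ $(\{v,m\},\{f(m)\},\emp)$) rather than consistency of four-element posets, and its case split on the relation between $m$ and $f(m)$ matches yours exactly, but the content is the same.
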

\begin{proof}
    We first claim that for $m \in M$ such that $m \ic v$, we have $f(m) = m$. Let $m \in M$ with $m \ic v$. As $f(v) = v$ we have $f(m) \ic v$. Suppose for a contradiction that $f(m) \neq m$. If $f(m) > m$ or $f(m) \ic m$, then $(\{v, m\}, \{f(m)\}, \emp)$ is a valid triple, and as $M$ is the generic poset this valid triple has a witness $n \in M$. As $n \in v^+$ we have $f(n) = n$. But $m < n$ and $f(m) \ic n$, contradiction. If $f(m) < m$, then $(\{v, f(m)\}, \{m\}, \emp)$ is a valid triple and we obtain a contradiction similarly, completing the proof of our claim.

    Now let $m \in M$ with $m < v$. We have $f(m) < v$. Suppose for a contradiction that $f(m) \neq m$. If $f(m) > m$ or $f(m) \ic m$, then $(\{m\},\{f(m), v\}, \emp)$ is a valid triple with a witness in $M$, fixed by $f$ by the previous claim -- contradiction. If $f(m) < m$, we similarly use the valid triple $(\{f(m)\}, \{m, v\}, \emp)$.

    As for each $m \in M$ we have $m \geq v$, $m \ic v$ or $m < v$, we have therefore shown that $f$ fixes each point of $M$.
\end{proof} 

\subsection{Outline of the remainder of the construction} \label{ss: proof outline}

To construct the $G$-poset $P$ extending $G \curvearrowright M_0$, we will inductively construct a chain of $G$-posets $M_0 \sub M_1 \sub \cdots$, and then take $P = \bigcup_{k < \omega} M_k$. We will do this in such a way that $P$ is isomorphic to the generic poset (equivalently, $P$ contains a witness for each finite valid triple in each $M_k$), but so that any automorphism $f \in \Aut(P)$ extending $\id_A$ fixes $S^+$ pointwise: these two requirements are somewhat in tension, and this leads to the complexity of our construction.

To build $M_k$ from $M_{k - 1}$, we take any sufficiently ``nice" (we say \emph{acceptable}) type over $M_{k - 1}$ whose support outside $S$ is finite and whose downward intersection with $S$ is in a particular sense well-behaved (\Cref{def:acceptable-pair}), and then we add a witness to $M_k$ for each element of the $G$-orbit of this type (\Cref{def:acceptable-construction}), extending the $G$-action to these new witness vertices via the $G$-action on types so that the new vertices added at stage $k$ form a $G$-orbit. We call the result an \emph{acceptable chain}. 

Let $P$ be the union of an acceptable chain. By \Cref{ac props}\ref{ac:s+diff}, for $m, n \in P$ added at different stages and both lying in $S^+$, we will have $m^- \cap S \neq n^- \cap S$. So any $h \in \Aut(P)$ which fixes $S$ pointwise cannot send $m$ to $n$, and thus $h$ will setwise-stabilise each layer $M_k \setminus M_{k-1}$ of new points which lie in $S^+$. This is a halfway step to the goal of ensuring that we fix $S^+$ pointwise, but we still have several problems to solve.
\begin{itemize}
    \item We need to guarantee that any $h \in \Aut(P)$ fixing $A$ pointwise does in fact fix $S$ pointwise: see \Cref{lem:minimality} for an initial half-step here towards at least setwise-fixing $S$. In the proof of \Cref{main thm} we then take an acceptable chain satisfying an additional condition (condition \ref{fin aps not S+ witnessed}) which will indeed finally ensure that $S$ is stabilised setwise. We will also ensure that $R \cup T$ is stabilised setwise, and the rigidity of $R \cup S$ will then imply that $S$ is stabilised pointwise (this is why we introduce $R$ in the definition of $M_0$: we use it to add rigidity and thereby fix each point of $S$).
    \item We also need to take a particular acceptable chain so that its union $P$ is isomorphic to the generic poset. We show that this is possible in \Cref{enough aps}, \Cref{enough aps2} and the proof of the main \Cref{main thm} (see condition \ref{fin triples witnessed} in the proof). The need to satisfy the one-point extension property of the generic poset while still having \Cref{ac props}\ref{ac:s+diff} is the reason for introducing the collections $\Sigma, \Sigma'$ of moieties in \Cref{s: moieties} (see also properties (AC3), (AC4) in \Cref{def:acceptable-pair}, the definition of an acceptable pair).
    \item The final issue is that we need to ensure that each layer $M_k \setminus M_{k-1}$ of new points lying in $S^+$ is actually fixed pointwise by $h$, not just setwise. This is the reason for the introduction of $T$ in the definition of $M_0$. See \Cref{enough aps2} and condition \ref{nice stabilisers} in the proof of the main \Cref{main thm}.
\end{itemize}

Of course, there are several other technical points of the construction which we cannot cover in this overview -- but we hope that the above description at least gives the reader an idea of the macro-structure of the proof. The reader may also find it helpful to read \Cref{s: main constr} non-linearly: once a particular concept has been introduced, it may be helpful to skip forward to the proof of \Cref{main thm} to see how it is used in the final construction.

\subsection{Acceptable chains} \label{ss: acceptable chains}

From now on, we take $\Sigma, \Sigma'$ to be fixed infinite sets of moieties of $S$ given by \Cref{build Sigmas}.

\begin{defn}\label{def:acceptable-pair}
    Let $M_0 \sub B$, where $B$ is a countable $G$-poset extending $M_0$. We say that a pair $(U,W)$ of subsets of $B$ is \emph{acceptable} if it satisfies the following conditions:
    \begin{enumerate}
        \item[(AC1)] $U < W$ (that is, $u < w$ for all $u \in U$, $w \in W$);
        \item[(AC2)] $U \setminus S$, $W \setminus S$ are finite;
        \item[(AC3)] if $U^- \cap S \neq \emp$, then we have $U^- \cap S \in \Sigma$ and also $U^- \cap S \neq m^- \cap S$ for any $m \in B$;
        \item[(AC4)] if $U^- \cap (R \cup T)=\emp$, then $W^+ \cap S \in \Sigma' \cup \{S\}$.
    \end{enumerate}
    
    For an acceptable pair $(U,W)$, we define $\tau_B(U,W)$ (or simply $\tau(U,W)$ if clear from context) to be the type over $B$ given by the valid triple $(U^-, B \setminus (U^-\cup W^+), W^+)$. 
    
    (We think of $\tau_B(U, W)$ as being ``generated" via $U < \tau_B(U, W) < W$ from the ``support" $(U, W)$, making $\tau_B(U, W)$ incomparable to all elements of $B$ for which this is possible.) 

    Note that if $(U, W)$ is an acceptable pair, then so is $(gU, gW)$ for $g \in G$, and $g \cdot \tau_B(U, W) = \tau_B(gU, gV)$. Also note that the $G$-orbit of $\tau_B(U, W)$ is countable, by (AC2) and the fact that each element of $G$ fixes $S$ pointwise.
\end{defn}

\begin{defn}\label{def:acceptable-construction}
    Let $B$ be a countable $G$-poset extending $M_0$, and let $(U,W)$ be an acceptable pair of subsets of $B$. Let $B \ast (U, W)$ denote the $G$-poset constructed by extending $B$ as follows: 
    \begin{itemize}
        \item add a point $e_r$ for each type $r \in \eta(B)$ in the $G$-orbit $\Omega$ of $\tau(U,W)$ (this orbit is countable, as noted in the above definition);
        \item to define the partial order on $B \ast (U,W)$, give each $e_r$ type $r$ over $B$, and take the transitive closure of the posets $B \cup \{e_r\}$ for $r \in \Omega$ (that is, $e_r < e_{r'}$ iff there exists $m\in B$ with  $e_r < m < e_{r'}$);
        \item the $G$-action is defined by $ge_r = e_{g \cdot r}$ for $g\in G$.
    \end{itemize}
    Note that for each new point $e_r$, we have $G_{e_r} = G_r$.
    
    We say that a $G$-poset $B'$ extending $B$ is an \emph{acceptable extension} if there is an acceptable pair $(U,W)$ of subsets of $B$ with $B' = B \ast (U, W)$. We call an \emph{acceptable chain} any sequence of $G$-posets $(M_k)_{k < l}$, $l \in \N \cup \{\omega\}$, such that $M_k$ is an acceptable extension of $M_{k-1}$ for each $k \geq 1$.
\end{defn}

We now show some properties satisfied by any acceptable chain.

\begin{notn}
    Let $P$ be a $G$-poset, and let $Q \sub P$. We write $G_Q$ for the pointwise stabiliser of $Q$ in the action $G \curvearrowright P$.
\end{notn}

\begin{lem}[Properties of acceptable chains] \label{ac props}
    Let $(M_k)_{k < l}$ be an acceptable chain. Let $k < l$ and let $m \in M_k$. The following properties hold:
    \begin{enumerate}[label=(\roman*)]
        \item\label{ac:stab} there is $A_0\fin A$ with $G_{A_0} \sub G_m$;
        \item\label{ac:s-} if $m \in S^- \setminus S$, there is $C \fin V_p$ with $m^- \cap V_p = C^- \cap V_p$;
        \item\label{ac:notrt+} if $m \notin (R \cup T)^+$ then $m^+ \cap S \in \Sigma' \cup \{S\}$;
        \item\label{ac:nots} if $m \notin S$ then $m^- \cap S \in \Sigma \cup \{\emp\}$;
        \item\label{ac:s+orb} if $m \in S^+$, then for distinct $n \in G \cdot m$ we have $m \ic n$;
        \item\label{ac:s+diff} if $m \in S^+$, then for $n \in M_k \setminus (G \cdot m)$ we have $m^- \cap S \neq n^- \cap S$.
    \end{enumerate}
    We also have that:
    \begin{itemize}
        \item[$(\ast)$] for each $k < l$, the set $\{m^- \cap S \mid m \in M_k \setminus S\}$ is finite.
    \end{itemize}
\end{lem}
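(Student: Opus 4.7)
The plan is to induct on $k$, verifying all seven conditions simultaneously. The base case $k = 0$ follows by case analysis on which of $A, R, S, T$ contains $m$, exploiting the structural facts: $S^+ = S$ in $M_0$ (from $W_q = \emp$ and $T \ic S$), $(R \cup T)^+ \cap A = W_p$, $m^- \cap S = \emp$ for $m \in M_0 \setminus S$, and $V_p \sub s^-$ for each $s \in S$. Most clauses are then immediate or vacuous: (2) need only be checked on $V_p \cup R$ (as $S^- \setminus S \sub V_p \cup R$ in $M_0$), where $C = \{m\}$ and $C = \emp$ respectively work; (5) is vacuous since $S^+ = S$ is fixed pointwise by $G$; (1) uses $A_0 = \emp$ for $R \cup S$ and $A_0 = \{a\}$ for $t_a$.

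For the inductive step, write $M_k = M_{k-1} \ast (U, W)$ and set $B = M_{k-1}$. A key preliminary observation is that $M_k$ introduces no new relations between elements of $B$: any chain $m < e_r < m'$ through a new point yields $m \leq gu < gw \leq m'$ for some $u \in U$, $w \in W$ by AC1, already a chain in $B$. So for old $m \in B$ the quantities $m^- \cap S$, $m^+ \cap S$, $m^- \cap V_p$ and $G \cdot m$ are unchanged, and the IH carries over. For a new point $e_r$ with $r = \tau_B(gU, gW)$, I would read $e_r^- \cap B = (gU)^-$ and $e_r^+ \cap B = (gW)^+$ directly from the defining valid triple, and then verify: (1) follows from AC2 combined with IH (1) for the finitely many elements of $(gU \cup gW) \setminus S$, using that $G$ fixes $S$ pointwise; (3) follows from AC4 together with the $G$-equivariance observations $(gU)^- \cap (R \cup T) = \emp$ iff $U^- \cap (R \cup T) = \emp$, and $(gW)^+ \cap S = W^+ \cap S$; (4) follows from AC3 since $(gU)^- \cap S = U^- \cap S$; and $(\ast)$ holds because all new points share the same $S$-downset $U^- \cap S$, contributing at most one new element to the IH-finite set.

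The delicate clauses are (2), (5), (6). For (5) and (6), note that $(gU)^- \cap S = U^- \cap S$ for every $g \in G$ (as $G$ fixes $S$ pointwise); if $e_r \in S^+$ then $U^- \cap S \neq \emp$, and any intermediate $m \in B$ witnessing $e_r < e_{g \cdot r}$ would be sandwiched so that $U^- \cap S \sub m^- \cap S \sub U^- \cap S$, forcing $m^- \cap S = U^- \cap S$ and contradicting the uniqueness clause of AC3. I expect (2) to be the main obstacle. Suppose $e_r \in S^- \setminus S$, witnessed by $e_r < s \in S$, so $s \in (gW)^+ \cap S$. If some $u \in U \cap S$ existed, then AC1 would give $u < w \leq s$ for some $w \in W$, so $u < s$ for distinct $u, s \in S$, contradicting $S$ being an antichain; analogously $U \cap T = \emp$ (else $u \in T$ with $u < s$ contradicts $T \ic S$). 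Hence $U \sub B \setminus (S \cup T)$, so $U$ is finite by AC2, and each $u \in U$ lies in $S^- \setminus S$ in $B$ via the chain through $w \in W$; applying IH (2) to each $u$ and setting $C = \bigcup_{u \in U} C_u$ yields the required finite subset of $V_p$. The genuine difficulty is that if $U \cap S \neq \emp$ were somehow not ruled out, then $e_r^- \cap V_p$ would equal all of $V_p$, which by \Cref{not upper limit not lower limit} and our standing assumption that $p$ is an upper limit of $\lambda(A)$ cannot be expressed as $C^- \cap V_p$ for any finite $C \fin V_p$: the antichain argument is precisely what rescues the induction.
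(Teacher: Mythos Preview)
Your proof is correct and follows the same inductive scheme as the paper: induction on $k$, with the base case handled by direct inspection of $M_0$ and the inductive step by unpacking the acceptable pair $(U,W)$ and invoking (AC1)--(AC4). Your ``key preliminary observation'' that no new relations arise between old points is a useful sanity check that the paper leaves implicit, and your argument for (\ref{ac:s-}) is the same as the paper's (the detour through $U \cap T = \emp$ is unnecessary, since $U \sub S^- \setminus S$ already excludes $T$, but harmless).

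The one genuine difference is in clause (\ref{ac:s+orb}). The paper argues that an intermediate point $n' \in W^+$ between $e_r$ and $ge_r$ lies in $S^+$, and then applies the induction hypothesis for (\ref{ac:s+orb}) to $n'$ (since $g^{-1}n' < n'$ with both in the same $G$-orbit). You instead sandwich the intermediate point's $S$-downset between $(gU)^- \cap S = U^- \cap S$ on both sides and invoke the uniqueness clause of (AC3) directly. Your route is arguably cleaner: it avoids checking $W^+ \sub S^+$ and does not need the induction hypothesis for (\ref{ac:s+orb}) at all. Note that you should also state explicitly how (\ref{ac:s+diff}) goes---it follows immediately from your observations (new points all share $S$-downset $U^- \cap S$, which by (AC3) differs from $n^- \cap S$ for every $n \in M_{k-1}$, and two old points are handled by the IH), but your write-up only spells out (\ref{ac:s+orb}).
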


\begin{proof}
    We prove each claim separately via induction on $k$. For the first five claims, when proving the induction step from $k-1$ to $k$, we may assume that $m \in M_k \setminus M_{k-1}$ and that there is some acceptable pair $(U, W)$ of subsets of $M_{k-1}$ with $m = e_{\tau(U, W)}$. 

    \ref{ac:stab}: In the base case, for $m \in A$ take $A_0 = \{a\}$, for $m = t_a \in T$ take $A_0 = \{a\}$ and for $m \in R \cup S$ take $A_0 = \emp$. Now we do the induction step. For $n \in (U \cup V) \setminus S$, by the induction hypothesis there is $A_n \fin A$ satisfying $G_{A_n} \sub G_n$. Let $A_0 = \bigcup_{n \in (U \cup V) \setminus S} A_n$. By (AC2) we have $A_0 \fin A$. Then for $g \in G_{A_0}$ and $n \in U \cup W$, we have $gn=n$ (recall that $G$ fixes $S$ pointwise), so $g \cdot \tau(U,W)=\tau(U,W)$ and hence $gm = m$.
    
    \ref{ac:s-}: In the base case we have $m \in V_p$ or $m \in R$: take $C = \{m\}$ or $C = \emp$ respectively. For the induction step, as $m = e_{\tau(U, W)} \in S^- \setminus S$ and $S$ is an antichain we have $U \sub S^- \setminus S$, so $U$ is finite by (AC2). By the induction hypothesis, for $u \in U$ we have $C_u \fin V_p$ with $u^- \cap V_p = C_u^- \cap V_p$. Let $C = \bigcup_{u\in U} C_u$. Then $C \fin V_p$ and $m^- \cap V_p = U^- \cap V_p = C^- \cap V_p$. 
    
    \ref{ac:notrt+}: In the base case $m \in V_p$ and $m^+ \cap S = S$. For the induction step, we have $m^+ \cap S = W^+ \cap S$, and $W^+ \cap S \in \Sigma' \cup \{S\}$ by (AC4).
    
    \ref{ac:nots}: In the base case $m^- \cap S = \emp$. For the induction step $m^- \cap S = U^- \cap S$, and $U^- \cap S \in \Sigma \cup \{\emp\}$ by (AC3).
    
    \ref{ac:s+orb}: In the base case, we have $m\in S$, so $G \cdot m = m$. We now do the induction step. Let $n \in G \cdot m \sub M_k$, $n \neq m$. So $n = gm$ for some $g \in G$. Suppose for a contradiction that $m < n$. Then by the definition of $M_k$ there is $n' \in M_{k-1}$ with $m < n' < n$, so $n' \in W^+$. As $g^{-1}n' < g^{-1}n = m$ we have $g^{-1}n' < n'$. But $n' \in W^+ \sub S^+$, so this contradicts the induction hypothesis for $n' \in M_{k-1}$. We argue similarly for the case $m > n$. So $m \ic n$.
    
    \ref{ac:s+diff}: In the base case, we have $m \in S$, so $m^- \cap S = \{m\}$ and $G \cdot m = \{m\}$. For $n \in S \setminus \{m\}$ we have $n^- \cap S = \{n\}$ and for $n \in M_0 \setminus S$ we have $n^- \cap S = \emp$. Now we do the induction step. If $n \notin S^+$ then $n^- \cap S = \emp$, and so as $m \in S^+$ we immediately have $m^- \cap S \neq n^- \cap S$. So we may assume $m, n \in S^+$. If both $m, n \in M_{k-1}$, we are done by the induction assumption, so without loss of generality we may assume $m \in M_k \setminus M_{k-1}$ and $n \in M_{k-1}$ (as $m, n$ are in different $G$-orbits). As before we write $m = e_{\tau(U, W)}$. By (AC3) we have $m^- \cap S = U^- \cap S \neq n^- \cap S$.
    
    $(\ast)$: In the base case, $m^- \cap S = \emp$ for $m \in M_0 \setminus S$. For the induction step, observe that as $G$ fixes $S$ pointwise each element of $M_k \setminus M_{k-1}$ has the same type over $S$.
\end{proof}

\begin{rem}
    Let $P$ be the union of an acceptable chain. By \Cref{ac props}\ref{ac:stab}, the induced group embedding $G \to \Aut(P)$ is also an embedding of topological groups (where, as usual, the topology is that of pointwise convergence).
\end{rem}

\begin{lem} \label{lem:minimality}
    Let $(M_k)_{k < l}$ be an acceptable chain, and let $P = \bigcup_{k<l} M_k$. Then:
    \begin{enumerate}[label=(\roman*)]
        \item\label{S min in tp q} each $s\in S$ is a minimal element of the set of elements of $P$ with type $q$ over $A$;
        \item\label{RT min coinitial} $R \cup T$ is exactly the set of minimal points of the set $\{m \in P \mid (m^+ \cup m^-) \cap S \text{ is finite}\}$.
    \end{enumerate}
\end{lem}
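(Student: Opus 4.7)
The proof splits into the two claims, each established by appealing to the properties of acceptable chains collected in \Cref{ac props} together with the hypothesis that $p$ is an upper limit of $\lambda(A)$. We use throughout the preliminary observation that condition (AC1) forces every acceptable extension to preserve the partial order on its base: any chain of the form $x < e_r < y$ with $x, y$ in the base gives $x \leq gu < gw \leq y$ for suitable $g \in G$, $u \in U$, $w \in W$ (since $r = g \cdot \tau(U,W)$ for some $g$), so $x < y$ already holds in the base. Consequently, the partial order on $M_0$ is preserved in $P$, and the antichains $S$ and $R \cup T$ of $M_0$ remain antichains in $P$.

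For claim (\ref{S min in tp q}), fix $s \in S$. Then $s$ has type $q$ over $A$ by \Cref{d: M_0}, and this persists in $P$ by the preceding remark. Suppose for contradiction that $m \in P$ satisfies $m < s$ and $\tp(m/A) = q$. Since $S$ is an antichain in $P$, we have $m \in S^- \setminus S$. By \Cref{ac props}(\ref{ac:s-}), there is $C \fin V_p$ with $m^- \cap V_p = C^- \cap V_p$. Since $p \in \lambda(A)$ gives $V_p^- = V_p$ (see \Cref{lambda and mu}), we have $C^- \sub V_p$ and so $C^- \cap V_p = C^-$. On the other hand, $\tp(m/A) = q = (V_p, W_p, \emp)$ forces $V_p \sub m^-$, so $V_p \sub m^- \cap V_p = C^-$; combined with $C^- \sub V_p$ this yields $C^- = V_p$, contradicting \Cref{not upper limit not lower limit}(1). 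This is the main obstacle of the lemma: without the upper-limit hypothesis on $p$, some finite $C \fin V_p$ could realise $V_p$ as $C^-$ and a smaller witness of type $q$ could appear below $s$.

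For claim (\ref{RT min coinitial}), let $Y = \{m \in P \mid (m^+ \cup m^-) \cap S \text{ is finite}\}$. Every $r_i \in R$ satisfies $r_i^+ \cap S = \{s_j \mid j \leq i\}$ and $r_i^- \cap S = \emp$, and every $t_a \in T$ is incomparable to $S$ in $M_0$ (hence in $P$), so $R \cup T \sub Y$. For minimality, suppose $m < r$ with $r \in R \cup T$ and $m \in Y$; then $m^+ \cap S$ is finite and therefore not in $\Sigma' \cup \{S\}$ (whose members are infinite), so the contrapositive of \Cref{ac props}(\ref{ac:notrt+}) gives $m \in (R \cup T)^+$. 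Pick $w \in R \cup T$ with $w \leq m < r$: then $w < r$ with both in $R \cup T$, contradicting the fact that $R \cup T$ is an antichain in $P$. Conversely, if $m$ is minimal in $Y$ then $m^+ \cap S$ is finite, and a further application of \Cref{ac props}(\ref{ac:notrt+}) gives $w \in R \cup T$ with $w \leq m$; since $w \in Y$, minimality forces $w = m$, so $m \in R \cup T$.
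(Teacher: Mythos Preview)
Your proof is correct and follows essentially the same route as the paper's: part (1) uses \Cref{ac props}(\ref{ac:s-}) together with \Cref{not upper limit not lower limit} exactly as the paper does, and part (2) reduces to the contrapositive of \Cref{ac props}(\ref{ac:notrt+}) plus the fact that $R \cup T$ is an antichain. Your preliminary observation that the partial order on $M_0$ is preserved in $P$ is unnecessary, since the definition of $B \ast (U,W)$ already specifies that the transitive closure only introduces relations of the form $e_r < e_{r'}$ (so $B$ is automatically an induced subposet and antichains in $M_0$ remain antichains in $P$); but including it does no harm.
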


\begin{proof}
    \ref{S min in tp q}: Let $m \in P$, and suppose $m < s$ for some $s \in S$. By \Cref{ac props}\ref{ac:s-} there is $C \fin V_p$ with $m^- \cap V_p = C^- \cap V_p$. As $p$ is an upper limit of $\lambda(A)$, by \Cref{not upper limit not lower limit} we cannot have $V_p \sub C^-$. So $m^- \cap V_p \neq V_p$, and so $m$ does not have type $q$ over $A$.
    
    \ref{RT min coinitial}: As $R \cup T$ is an antichain, it suffices to show that for each $m \in P$ with $(m^+ \cup m^-) \cap S$ is finite, there is $n \in R \cup T$ with $n \leq m$. This follows immediately from \Cref{ac props}\ref{ac:notrt+}.
\end{proof}

\subsection{Guaranteeing witnesses for one-point extension} \label{ss: o-p ext}

To construct an acceptable chain whose union is the generic poset, we will show that it is possible to satisfy the one-point extension property: that is, we will show that in the construction we may ensure there is a witness for each finite valid triple.

\begin{lem} \label{enough aps}
    Let $(M_k)_{k < l}$ be an acceptable chain. Let $k < l$ and let $(U_0,V_0,W_0)$ be a finite valid triple in $M_k$. 
    \begin{itemize}
        \item If $U_0^- \cap S \neq \emp$, then there exists $Z \in \Sigma$ such that $(U_0 \cup Z, W_0)$ is an acceptable pair with $\tau(U_0 \cup Z, W_0)$ in the open set $\langle U_0, V_0, W_0 \rangle$.
        \item If $U_0^- \cap S = \emp$, then there exists $Z \in \Sigma' \cup \{\emp\}$ such that $(U_0, W_0 \cup Z)$ is an acceptable pair with $\tau(U_0, W_0 \cup Z)$ in the open set $\langle U_0, V_0, W_0 \rangle$.
    \end{itemize}
\end{lem}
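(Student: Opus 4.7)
My plan is a case split on whether $U_0^- \cap S$ is empty, reducing each case to an application of \Cref{build Sigmas}.

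In Case 1 ($U_0^- \cap S \neq \emp$), I would begin by extracting structural consequences of the assumption in order to set up the parameters for \Cref{build Sigmas}(1). Validity of $(U_0, V_0, W_0)$ combined with the antichain property of $S$ forces $W_0 \cap S = \emp$ (otherwise two elements of $S$ would be strictly comparable through $U_0$), and every $w \in W_0$ then lies in $S^+ \setminus S$, so that $w^- \cap S \in \Sigma$ by \Cref{ac props}(\ref{ac:nots}). For $u \in U_0 \setminus S$ the same lemma gives $u^- \cap S \in \Sigma \cup \{\emp\}$. For $v \in V_0$ one has to check that $v^+ \cap S \neq S$ (else $v \leq s \leq u$ for some $u \in U_0$ and $s \in U_0^- \cap S$, contradicting $v \ic u$), and a case analysis over the possible origin of $v$, combined with \Cref{ac props}(\ref{ac:notrt+}), shows $v^+ \cap S$ is either finite or lies in $\Sigma'$. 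I would then apply \Cref{build Sigmas}(1) with $\mc{U}, \mc{W}$ the relevant collections of $u^- \cap S$ and $w^- \cap S$, $C = U_0 \cap S$, and $D, \mc{V}$ collecting the finite and $\Sigma'$-valued pieces of $\bigcup_{v \in V_0} v^+ \cap S$. The inclusion and disjointness hypotheses of the lemma follow from validity of the triple; crucially, the condition $\mc{U} \cap \mc{W} = \emp$ uses \Cref{ac props}(\ref{ac:s+orb}) and \Cref{ac props}(\ref{ac:s+diff}), since $u^- \cap S = w^- \cap S$ with $u < w$ would place $u, w$ in a common $G$-orbit inside $S^+$, contradicting that such orbit elements are incomparable. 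The infinitely-many-$Z$ clause combined with property $(\ast)$ then lets me pick $Z \in \Sigma$ avoiding every $m^- \cap S$ for $m \in M_k$, securing the full strength of (AC3). The remaining checks of acceptability and of the $\tau$-condition are routine; in particular (AC4) is vacuous, since any nonempty $Z \sub S$ places elements of $R$ into $(U_0 \cup Z)^-$.

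In Case 2 ($U_0^- \cap S = \emp$), the natural first attempt $Z = \emp$ already makes $(U_0, W_0)$ acceptable unless (AC4) fails, that is, unless $U_0^- \cap (R \cup T) = \emp$ and $W_0^+ \cap S \notin \Sigma' \cup \{S\}$. In this remaining subcase I would first show that $u^+ \cap S \in \Sigma' \cup \{S\}$ for every $u \in U_0$: the hypotheses force $U_0 \cap (R \cup S \cup T) = \emp$, and $u \in U_0 \cap A$ must lie in $V_p$ (otherwise $u \in W_p$ would give $r_i < u$, contradicting $U_0^- \cap (R \cup T) = \emp$), while the newly-added case uses (AC4) at the stage $u$ was introduced. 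Likewise, no $w \in W_0$ can satisfy $w^+ \cap S = S$, else (AC4) would already hold with $Z = \emp$. I would then apply \Cref{build Sigmas}(2) to obtain $Z \in \Sigma'$ containing $W_0^+ \cap S$, contained in $\bigcap_{u \in U_0} u^+ \cap S$, and disjoint from $\bigcup_{v \in V_0} v^- \cap S$; the latter decomposes into a finite part ($V_0 \cap S$) and a $\Sigma$-part (from \Cref{ac props}(\ref{ac:nots})), and the required disjointness for the lemma follows from $V_0 \ic W_0$. Then $(W_0 \cup Z)^+ \cap S = Z \in \Sigma'$ yields (AC4), and the $\tau$-condition together with the other acceptability requirements follows.

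The main technical obstacle throughout is the classification of each subset of $S$ appearing as a constraint -- whether it lies in $\Sigma$, $\Sigma'$, is finite, or equals $S$ -- so that \Cref{build Sigmas} may be invoked. All of this bookkeeping is supplied by \Cref{ac props} together with the explicit structure of $M_0$, in particular the roles played by $R$, $T$, and the fixed upper limit $p$.
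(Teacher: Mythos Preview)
Your proposal is correct and follows essentially the same approach as the paper: the same two-case split, the same reduction to \Cref{build Sigmas} via the bookkeeping supplied by \Cref{ac props}, and the same use of property~$(\ast)$ together with the infinitely-many-$Z$ clause to secure (AC3) in Case~1. The minor imprecisions (e.g.\ writing ``$V_0 \ic W_0$'' where validity only gives $\neg(w < v)$, or citing (AC4) directly for later-stage $u$ rather than \Cref{ac props}(\ref{ac:notrt+})) do not affect the argument.
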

\begin{proof}
    First consider the case where $U_0^- \cap S \neq \emp$. So $W_0 \sub S^+ \setminus S$ and $U_0^- \cap (R \cup T) \neq \emp$. Let
    \begin{alignat*}{2}
        \mc{U} &= \{u^- \cap S \mid u \in U_0 \cap (S^+ \setminus S)\},\quad C &&= U_0 \cap S, \\
        \mc{V} &= \{v^+ \cap S \mid v \in V_0 \setminus (R \cup T)^+\},\quad D &&= \textstyle\bigcup\{v^+ \cap S \mid v \in V_0 \cap (R \cup T)^+\}, \\
        \mc{W} &= \{w^- \cap S \mid w \in W_0\}.
    \end{alignat*}
    Then:
    \begin{itemize}
        \item by \Cref{ac props}\ref{ac:nots} we have $\mc{U} \sub \Sigma$ and $\mc{W} \sub \Sigma$;
        \item by \Cref{ac props}\ref{ac:notrt+} we have $\mc{V} \sub \Sigma'$ (note that as $(U_0, V_0, W_0)$ is a valid triple and $U_0^- \cap S \neq \emp$, there is no $v \in V_0$ with $v^+ \cap S = S$);
        \item by the definition of $M_0$ (specifically, that all pairs of elements of $S$ and $T$ are incomparable and also that each element of $R$ is less than only finitely many elements of $S$), we have that $D$ is finite;
        \item as $(U_0, V_0, W_0)$ is a valid triple, we have $C \cup \bigcup \mc{U} \sub \bigcap \mc{W}$ and $(C \cup \bigcup \mc{U}) \cap (D \cup \bigcup \mc{V}) = \emp$;
        \item as $W_0 \sub S^+$ and $U_0 < W_0$, by \Cref{ac props}\ref{ac:s+orb},\ref{ac:s+diff} we have $\mc{U} \cap \mc{W} = \emp$.
    \end{itemize}
    So by \Cref{build Sigmas}\ref{ZinSigma} there are infinitely many $Z \in \Sigma$ with $C \cup \bigcup \mc{U} \sub Z \sub \bigcap \mc{W}$ and $Z \cap (D \cup \bigcup \mc{V}) = \emp$. We take one such $Z$ with the additional property that $Z$ does not lie in the set $\{m^- \cap S \mid m \in M_k \setminus S\}$ (this set is finite by \Cref{ac props}($\ast$)). We then have $U_0^- \cap S \sub Z$ and $Z < W_0$ and $V_0 \cap Z^- = \emp$, and so $(U_0 \cup Z, W_0)$ is an acceptable pair with $\tau(U_0 \cup Z, W_0) \in \langle U_0, V_0, W_0 \rangle$. 

    Now consider the case where $U_0^- \cap S = \emp$. If $U_0^- \cap (R \cup T) \neq \emp$, then trivially $(U_0, W_0)$ is an acceptable pair and $\tau(U_0, W_0) \in \langle U_0, V_0, W_0 \rangle$. So assume $U_0 ^- \cap (R \cup T) = \emp$. If $W_0^+ \cap S = S$ then we immediately have that $(U_0, W_0)$ is an acceptable pair, so we may assume $W_0^+ \cap S \neq S$. Let
    \begin{alignat*}{2}
        \mc{U} &= \{u^+ \cap S \mid u \in U_0, u^+ \cap S \neq S\},\\
        \mc{V} &= \{v^- \cap S \mid v \in V_0 \cap (S^+ \setminus S)\},\quad &&D = V_0 \cap S,\\
        \mc{W} &= \{w^+ \cap S \mid w \in W_0 \setminus (R \cup T)^+\},\quad &&C = \textstyle\bigcup \{w^+ \cap S \mid w \in W_0 \cap (R \cup T)^+\}.
    \end{alignat*}
    Then:
    \begin{itemize}
        \item by \Cref{ac props}\ref{ac:notrt+} we have $\mc{U} \sub \Sigma'$ and $\mc{W} \sub \Sigma'$;
        \item by \Cref{ac props}\ref{ac:nots} we have $\mc{V} \sub \Sigma$;
        \item as $U_0$ is finite we have that $C$ is finite;
        \item as $(U_0, V_0, W_0)$ is a valid triple we have $C \cup \bigcup \mc{W} \sub \bigcap \mc{U}$ and $(C \cup \bigcup \mc{W}) \cap (D \cup \bigcup \mc{V}) = \emp$.
    \end{itemize}

    So by \Cref{build Sigmas}\ref{ZinSigma'} there is $Z \in \Sigma'$ with $W_0^+ \cap S \sub Z \sub \bigcap_{u \in U_0} u^+$ and $Z \cap V_0^- = \emp$. Recalling that $U_0 \cap S = \emp$, we thus have $U_0 < W_0 \cup Z$. So $(U_0, W_0 \cup Z)$ is an acceptable pair and $\tau(U_0, W_0 \cup Z) \in \langle U_0, V_0, W_0 \rangle$.
\end{proof}

\subsection{The stabiliser lemma} \label{ss: stabilisers}

The previous section shows that there exists an acceptable chain $(M_k)_{k < \omega}$ whose union is the generic poset. To guarantee unique extension of automorphisms of $A$, we further refine the construction by controlling better the action of $G$ on $S^+$.

\begin{notn}
    Let $(M_k)_{k < l}$ be an acceptable chain, and let $P = \bigcup_{k < l} M_k$. When we write $S^+$, this means the upward-closure of $S$ in $P$. For $k < l$, we define $S^+_k = S^+ \cap M_k$.
\end{notn}

\begin{lem}\label{enough aps2}
    Let $(M_k)_{k < l}$ be an acceptable chain with $l > 0$ finite. Let $(U_0, V_0, W_0)$ be a finite valid triple in $M_{l-1}$. Assume $U_0^- \cap S \neq \emp$.
    
    Then we can extend $(M_k)_{k < l}$ to an acceptable chain $(M_k)_{k < l'}$ such that $M_{l'-1}$ contains a point with a type lying in the open set $\langle U_0, V_0, W_0 \rangle$, and the following holds:
    \[ \text{for all } l \leq k < l' \text{ and } m \in M_k \setminus M_{k - 1}, \text{ we have } G_{m{\vphantom{\tp(m / M_0 \,\cup\, S^+_{k - 1})}}} = G_{\tp(m / M_0 \,\cup\, S^+_{k - 1})}. \tag{$\star$}\]
\end{lem}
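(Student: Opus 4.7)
The plan is to apply Lemma \ref{enough aps} first to secure the target witness, then to add ``helper'' points in $S^+$ at intermediate stages so as to make condition $(\star)$ hold for each new point added from stage $l$ onwards.

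By Lemma \ref{enough aps}, since $U_0^- \cap S \neq \emp$, there is $Z_1 \in \Sigma$ such that $(U_1, W_1) := (U_0 \cup Z_1, W_0)$ is acceptable in $M_{l-1}$ and $\tau(U_1, W_1) \in \langle U_0, V_0, W_0 \rangle$. Writing $m := e_{\tau(U_1, W_1)}$ once added, the stabiliser $G_m$ equals the $G$-setwise stabiliser of the ordered pair $(U_1^-, W_1^+)$ computed in $M_{l-1}$, while $G_{\tp(m / M_0 \cup S^+_{l-1})}$ is the setwise stabiliser of the restricted pair $(U_1^- \cap X, W_1^+ \cap X)$ for $X = M_0 \cup S^+_{l-1}$. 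The inclusion $G_m \sub G_{\tp(m/X)}$ is automatic; the reverse can fail only through ``hidden'' elements of $(U_1^- \cup W_1^+) \setminus X$, which lie in $M_{l-1} \setminus (M_0 \cup S^+)$ and hence, by the definition of acceptable extension and by \Cref{ac props}, originate from previously added points whose defining acceptable pair had empty $S$-downset (the case governed by (AC4)).

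The idea is therefore to add $m$ only at the final stage $l' - 1$, inserting at intermediate stages $l, \ldots, l' - 2$ a finite collection of helper points lying in $S^+$. Each helper is produced by another application of Lemma \ref{enough aps} to a carefully chosen finite valid triple so that the helper sits above the relevant hidden points; once added, the helper enters the enriched $X$ at subsequent stages, making the formerly hidden relations visible in the restricted type of $m$. By (AC2), the sets $U_1 \setminus S$ and $W_1 \setminus S$ are finite, and by \Cref{ac props}(\ref{ac:stab}) each hidden $G$-orbit is controlled by a finite subset of $A$, so only finitely many helpers are required. Condition $(\star)$ for each helper is verified inductively along a well-founded dependency ordering: each helper is added after those it depends on, its own hidden set having been resolved at earlier stages, with the trivial base case being a helper whose acceptable pair has no hidden element at all.

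The main obstacle is the circular compatibility between the helpers and $m$: each helper must itself satisfy $(\star)$ at the time it is added (potentially requiring yet-earlier helpers), while collectively the helpers must not alter the eventual type of $m$ outside $\langle U_0, V_0, W_0 \rangle$. The former is handled by the well-foundedness of the dependency relation, guaranteed by the finiteness of the obstruction data assembled from (AC2) and \Cref{ac props}; the latter by exploiting the infinite abundance of admissible $Z$'s provided by Lemma \ref{enough aps} to choose each helper generically incomparable with $V_0$ and with previously chosen helpers.
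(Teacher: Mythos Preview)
Your proposal is more a sketch of a strategy than a proof, and the strategy has a genuine gap at its core. You correctly identify that the obstruction to $(\star)$ is that $G_m$ may depend on elements of $M_{l-1}$ lying outside $M_0 \cup S^+_{l-1}$, and you propose inserting ``helper'' points in $S^+$ above these hidden elements. But you never explain the mechanism by which a helper actually transfers stabiliser information into $M_0 \cup S^+$. If a helper $h$ is built to sit above a hidden point $n \notin M_0 \cup S^+$, then $n$ lies in the $U^-$ of $h$'s own acceptable pair, so $(\star)$ for $h$ faces exactly the same problem: $G_h$ depends on $n$, but $n$ is still invisible from $M_0 \cup S^+_{\text{(before $h$)}}$. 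Your appeal to a ``well-founded dependency ordering'' with base case ``a helper whose acceptable pair has no hidden element at all'' is circular: such a base-case helper by definition sits above no hidden point, so it cannot resolve anything, and the induction never gets off the ground.

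What is missing is the paper's key device: the set $T = \{t_a \mid a \in V_p\}$ inside $M_0$, designed precisely so that $G_{t_a} = G_a$. The paper does not try to make the original hidden points visible at all. Instead it chooses $A_0 \fin A$ with $G_{A_0} \sub G_{U_0 \cup W_0}$, and for each $a_i \in A_0$ builds a helper $e_i \in S^+$ whose acceptable pair contains $t'_{a_i} \in M_0$; this forces $G_{e_i} = G_{a_i}$, and crucially this equality is witnessed by $e_i$'s type over $M_0$ alone, so $(\star)$ for $e_i$ is immediate. A further point $e_d$ is then placed above all the $e_i$, giving $G_{e_d} = G_{A_0}$ visible from the new $S^+$-layer, and finally the actual witness $e_{d+1}$ is placed \emph{below} $e_d$ (and above $U_0$, below $W_0$), so that its type over $S^+$ already pins down $G_{A_0} \sub G_{U_0 \cup W_0}$. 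Your outline contains no analogue of this $T$-anchoring step, and without it there is no way to manufacture helpers satisfying $(\star)$.
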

\begin{proof}
    First observe that, given an acceptable chain up to stage $M_k$, to prove ($\star$) for $k$ it suffices to show the equality for a single point of $M_k \setminus M_{k - 1}$, as $M_k \setminus M_{k - 1}$ forms a $G$-orbit and the $G$-action on this orbit is determined by the action on types over $M_{k-1}$ (see \Cref{def:acceptable-pair}). Also observe that given $G$-posets $Q' \sub Q \sub P$, for $m \in P$ we have $G_{\tp(m/Q)} \sub G_{\tp(m/Q')}$. We will use these observations repeatedly throughout this proof.

    For $a\in V_p$ we define $t'_a = t^{}_a$, and for $a \in W_p$ we define $t'_a = a$. Note that $t'_a \notin S^-$ for all $a \in A$, and also note that as $p$ is $G$-fixed, for all $g \in G$ we have $gV_p = V_p$, $gW_p = W_p$. Let $s_0 \in S$, and for $a \in A$ let $T'_a = \{t'_a, s^{}_0\}$. By \Cref{ac props}\ref{ac:stab} there is $A_0 \fin A$ with $G_{A_0} \sub G_{U_0 \cup W_0}$. Enumerate $A_0=\{a_i \mid i < d\}$.
    
    We define $M_{l + i}$ for $i < d$ inductively, together with associated distinguished points $e_i \in M_{l + i}$. Let $i < d$, and assume we have defined $M_{l + j}$ and $e_j \in M_{l + j}$ for $j < i$. By \Cref{enough aps}, there is $Z_i \in \Sigma$ such that $(T'_{a_i} \cup Z_i, \emp)$ is an acceptable pair in $M_{l + i - 1}$. Let $M_{l + i} = M_{l + i - 1} \ast (T'_{a_i} \cup Z_i, \emp)$, and let $\tau_i = \tau_{M_{l+i-1}}(T'_{a_i} \cup Z_i, \emp)$ and $e_i = e_{\tau_i} \in M_{l + i}$. (So $\tau_i = ((T'_{a_i} \cup Z_i)^-, M_{l+i-1} \setminus (T'_{a_i} \cup Z_i)^-, \emp)$, where the downward-closure is taken in $M_{l+i-1}$.) This completes the inductive definition. Note that for each $i < d$ and for each $g \in G$ we have that $ge_i \in S^+$ and that $ge_i$ is maximal in $M_{l + i}$. We therefore also have that $\{ge_i \mid i < d, g \in G\}$ is an antichain: by \Cref{ac props}\ref{ac:s+orb} each $G$-orbit is an antichain, and given distinct $ge_i$, $g'e_{i'}$ with $i < i'$, we see that by maximality of $g'e_{i'}$ we have $ge_i \ic g'e_{i'}$ or $ge_i < g'e_{i'}$, and the latter cannot occur by the definition of $\tau_{i'}$.
    
    \textbf{Claim: $G_{\tau_i} = G_{a_i}$ for $i < d$ and ($\star$) holds for $l \leq k < l + d$.}
    \begin{subproof}
        First note that it suffices to show ($\star$) for each $e_i$, and also note that $G_{e_i} = G_{\tau_i}$. It is immediate by the definition of $\tau_i$ that $G_{a_i} \sub G_{\tau_i}$. As $G_{\tau_i \vphantom{\tp(e_i / M_0 \cup S^+_{l + i - 1})}} \sub G_{\tp(e_i / M_0 \cup S^+_{l + i - 1})} \sub G_{\tp(e_i / M_0) \vphantom{\tp(e_i / M_0 \cup S^+_{l + i - 1})}}$, it suffices to show $G_{\tp(e_i / M_0)} \sub G_{a_i}$. For $g \in G_{\tp(e_i / M_0)}$ we have $t'_{ga_i} = gt'_{a_i} \in (T'_{a_i} \cup Z_i)^-$. As $t'_{ga_i} \notin S^-$, this implies $t'_{ga_i} \leq t'_{a_i}$. Similarly $t'_{a_i} \leq t'_{ga_i}$, so $t'_{a_i} = t'_{ga_i}$ and thus $ga_i = a_i$. 
    \end{subproof}

    By \Cref{enough aps} there is $Z_d \in \Sigma$ such that $(U_0\cup \{e_i \mid i < d\} \cup Z_d, \emp)$ is an acceptable pair in $M_{l + d - 1}$. Let $M_{l + d} = M_{l + d - 1} \ast (U_0\cup \{e_i \mid i < d\} \cup Z_d, \emp)$, and let $\tau_d = \tau_{M_{l + d - 1}}(U_0\cup \{e_i \mid i < d\} \cup Z_d, \emp)$ and $e_d = e_{\tau_d} \in M_{l + d}$. Note that $e_d \in S^+$ and $e_d$ is maximal in $M_{l + d}$.

    \textbf{Claim: $G_{\tau_d} = G_{A_0}$ and ($\star$) holds for $k = l + d$.}
    \begin{subproof}
        Let $g \in G_{A_0}$. Then we have that $g$ fixes every element of $U_0 \cup \{e_i \mid i < d\}\cup Z_d$ pointwise, as we took $A_0$ with $G_{A_0} \sub G_{U_0 \cup W_0}$ and using the previous claim and the fact that $Z_d \sub S$. So $g\tau_d = \tau_d$.

        Let $g \in G_{\tau_d \vphantom{\tp(e_d / M_0 \cup S^+_{l+d-1})}} \sub G_{\tp(e_d / M_0 \cup S^+_{l+d-1})} \sub G_{\tp(e_d / (M_{l+d-1} \setminus M_{l-1})) \vphantom{\tp(e_d / M_0 \cup S^+_{l+d-1})}}$. (Here, note that $M_{l+d-1} \setminus M_{l-1} = G \cdot \{e_j \mid j < d\} \sub S^+_{l+d-1}$.) Then $ge_i \in (U_0 \cup \{e_j \mid j < d\} \cup Z_d)^-$ for $i < d$. As $ge_i \notin (U_0 \cup Z_d)^-$ and $G \cdot \{e_j \mid j < d\}$ is an antichain, we must have $ge_i = e_i$. So $g \in \bigcap_{i < d} G_{e_i} \sub G_{A_0}$.
    \end{subproof}
    
    As $e_d$ is maximal in $M_{l + d}$ and $e_d > U_0$, we have that $(U_0, V_0, \{e_d\} \cup W_0)$ is a valid triple in $M_{l + d}$. By Lemma \ref{enough aps} there is $Z_{d + 1} \in \Sigma$ such that $(U_0 \cup Z_{d + 1}, \{e_d\} \cup W_0)$ is an acceptable pair in $M_{l + d}$ and $\tau_{d + 1} = \tau_{M_{l + d}}(U_0 \cup Z_{d + 1}, \{e_d\} \cup W_0) \in \langle U_0, V_0, W_0 \rangle$. Let $M_{l + d + 1} = M_{l + d} \ast (U_0 \cup Z_{d + 1}, \{e_d\} \cup W_0)$ and $e_{d + 1} = e_{\tau_{d + 1}}$. Note that $e_{d + 1} \in S^+$.
    
    \textbf{Claim: $G_{\tau_{d + 1}} = G_{A_0}$ and ($\star$) holds for $k = l + d + 1$.}
    \begin{subproof}
        Let $g \in G_{A_0}$. Then $g$ fixes every element of $U_0 \cup Z_{d + 1} \cup \{e_d\} \cup W_0$ pointwise, so $g\tau_{d + 1} = \tau_{d + 1}$.

        Let $g \in G_{\tau_{d + 1} \vphantom{\tp(e_{d + 1} / M_0 \cup S^+_{l + d})}} \sub G_{\tp(e_{d + 1} / M_0 \cup S^+_{l + d})} \sub G_{\tp(e_{d + 1} / (M_{l + d} \setminus M_{l + d - 1})) \vphantom{\tp(e_{d + 1} / M_0 \cup S^+_{l + d})}}$. Then $ge_d \in (\{e_d\} \cup W_0)^+$. Suppose for a contradiction that $ge_d^{} \in W_0^+$. Then $ge_d > w$ for some $w \in W_0$, so $e_d > g^{-1}w$ and thus $g^{-1}w \in (U_0\cup \{e_i \mid i < d\} \cup Z_d)^-$. Recalling that $U_0^- \cap S \neq \emp$ and that $S$ is an antichain, we have $w \in S^+ \setminus S$, so $g^{-1}w \in S^+ \setminus S$ and thus $g^{-1}w \notin Z_d^-$. As $U_0 < W_0$, if $g^{-1}w \in U_0^-$ then $g^{-1}w < w$, contradicting \Cref{ac props}\ref{ac:s+orb}. If $g^{-1}w \in \{e_i \mid i < d\}^-$, then $g^{-1}w < e_i$ for some $i < d$, and so $g^{-1}w \in (T'_{a_i} \cup Z_i)^-$. But $g^{-1}w \in S^+ \setminus S$, contradiction. 
        
        So $ge_d^{} \in e_d^+$. As $e_d$ is maximal in $M_{l + d}$ we have $ge_d = e_d$. By the previous claim we have $G_{e_d} = G_{A_0}$, so $g \in G_{A_0}$.
    \end{subproof}
    We therefore see that $(M_k)_{k < l + d + 2}$ has the required properties: we have $\tp(e_{d + 1} / M_{l + d}) = \tau_{d + 1} \in \langle U_0, V_0, W_0 \rangle$, and $(\star)$ holds for $l \leq k < l + d + 2$. 
\end{proof}

\subsection{The main theorem}

We now conclude the proof of our main theorem.

\begin{proof}[Proof of \Cref{main thm}]
    By \Cref{enough aps} and \Cref{enough aps2}, there is an acceptable chain $(M_k)_{k < \omega}$ satisfying the following properties:
    \begin{enumerate}[label=(\alph*)]
        \item\label{fin triples witnessed} for each $k < \omega$ and for every finite valid triple $(U_0, V_0, W_0)$ in $M_k$, there is $l > k$ and a point $m \in M_l$ with $\tp(m / M_{l-1}) \in \langle U_0, V_0, W_0 \rangle$;
        \item\label{nice stabilisers} for each $1 \leq k < \omega$ and $m \in S^+_{k} \setminus S^+_{k - 1}$, we have $G_{m{\vphantom{\tp(m / M_0 \,\cup\, S^+_{k - 1})}}} = G_{\tp(m / M_0 \,\cup\, S^+_{k - 1})}$;
        \item\label{fin aps not S+ witnessed} for each $k < \omega$ and acceptable pair $(U_0, W_0)$ of finite subsets of $M_k$ with $U_0^- \cap S = \emp$, there is $l > k$ and $m \in M_l$ with $\tp(m/M_k) = \tau_{M_k}(U_0, W_0)$.
    \end{enumerate}
    
    (Here, there are two types of task to be completed: finding witnesses for each finite valid triple so that conditions \ref{fin triples witnessed} and \ref{nice stabilisers} hold, and finding witnesses for each finite acceptable pair to satisfy condition \ref{fin aps not S+ witnessed}. It is immediate how to complete the latter task and satisfy condition \ref{fin aps not S+ witnessed}, using the definition of an acceptable extension. To satisfy conditions \ref{fin triples witnessed} and \ref{nice stabilisers}: given a finite valid triple $(U_0, V_0, W_0)$, if $U_0^- \cap S = \emp$ then we use \Cref{enough aps}, and if $U_0^- \cap S \neq \emp$ then we use \Cref{enough aps2}. We schedule the countably many tasks to be completed using a scheduling function similar to that used in the usual proof of \Frn's theorem, found in \cite[Section 7.1]{Hod93} -- we omit the details.)

    Let $P = \bigcup_{k < \omega} M_k$. Condition \ref{fin triples witnessed} implies that $P$ is (isomorphic to) the generic poset. By construction, $P$ is a $G$-poset extending $G \curvearrowright A$, so every automorphism of $A$ extends to an automorphism of $P$. It remains to show uniqueness of extension.
    
    Let $h \in \Aut(P)$ with $h|_A = \id_A$. We will show that $h = \id_P$. We do this via the following steps:
    \begin{enumerate}[label=(\roman*)]
        \item\label{h fixes S sw} $h$ fixes $S$ setwise;
        \item\label{h fixes R sw and T sw} $h$ fixes $R$ setwise and fixes $T$ setwise;
        \item\label{h fixes RTS pw} $h$ fixes $R \cup T \cup S$ pointwise;
        \item\label{h fixes S+ pw} $h$ fixes $S^+$ pointwise.
    \end{enumerate}
    The final claim gives that $h = \id_P$, via \Cref{uniq on +-closed}. We now prove these claims in order.
    
    \ref{h fixes S sw}: By \Cref{lem:minimality}\ref{S min in tp q}, each point in $S$ is minimal in the set of points in $P$ with type $q$ over $A$. We now show that every point that is minimal in the set of points in $P$ with type $q$ over $A$ is an element of $S$. Let $m \in P$ be such a point and suppose for a contradiction that $m \notin S$. Then $m \notin S^- \cup S^+$ by minimality of each point of $S$ and by the minimality of $m$. So $m \ic S$. Let $k$ be such that $m \in M_k \setminus M_{k-1}$ and let $(U,W)$ be an acceptable pair in $M_{k-1}$ with $\tp(m / M_{k - 1}) = \tau(U,W)$. As $m \ic S$ we have $U \cap S = W \cap S = \emp$, so $U$ and $W$ are finite. Also $m \ic S$ implies $U^- \cap S = \emp$, and in addition $m^+ \cap S = W^+ \cap S$, so $(U, W \cup \{m\})$ is an acceptable pair in $M_k$. Thus by condition \ref{fin aps not S+ witnessed} of our acceptable chain there is $n \in P \setminus M_k$ satisfying $\tp(n / M_k) = \tau_{M_k}(U, W \cup \{m\})$. Then $n^- \cap A = U^- \cap A = m^- \cap A$ and $n^+ \cap A = (W^+ \cap A) \cup (m^+ \cap A) = m^+ \cap A$. So $\tp(n/A) = q$, but $n<m$, so $m$ is not minimal, contradiction.

    So $S$ is exactly the set of minimal points of the set of points in $P$ with type $q$ over $A$. As the set of points in $P$ with type $q$ over $A$ is fixed setwise by $h$, we have that $S$ is fixed setwise by $h$.
    
    \ref{h fixes R sw and T sw}: We have that $h$ fixes $R \cup T$ setwise immediately by the previous claim and \Cref{lem:minimality}\ref{RT min coinitial}. As each element of $R$ is related to some element of $S$ and each element of $T$ is not related to any element of $S$, we have that $h$ fixes $R$ setwise and fixes $T$ setwise.
    
    \ref{h fixes RTS pw}: Each point in $T$ has a different type over $A$, so $h|_T = \id_T$. As $h$ fixes $R \cup S$ as a set and the partial order on $R \cup S$ is rigid, we have $h|_{R \cup S} = \id_{R \cup S}$.
    
    \ref{h fixes S+ pw}: We show by induction that $h$ fixes $S^+_{k}$ pointwise for $k < \omega$. As $S^+_{0} = S$, the case $k = 0$ is given by the previous claim. Suppose that $h$ fixes $S^+_{k - 1}$ pointwise. Let $m \in S^+_{k} \setminus S^+_{k - 1}$. As $\tp(m/S) = \tp(h(m)/S)$, in particular we have $m^- \cap S = h(m)^- \cap S$, and so by \Cref{ac props}\ref{ac:s+diff} we have that $h(m) = gm$ for some $g \in G$. By the induction assumption and the previous claim we have $h|_{M_0 \,\cup\, S^+_{k - 1}} = \id_{M_0 \,\cup\, S^+_{k - 1}}$, so $gm$ and $m$ have the same type over $M_0 \cup S^+_{k - 1}$. So $g \in G_{\tp(m / M_0 \,\cup\, S^+_{k - 1})}$, and by condition \ref{nice stabilisers} in our construction of $P$, we have $g \in G_m$. So $h(m) = gm = m$.
\end{proof}

\bibliographystyle{alpha}
\bibliography{references}

\end{document}